\newcommand{\edits}[1]{#1}
\newcommand{\fullver}[2]{#1}
\newcommand{\pedant}[2]{#1}
\renewcommand*\env@matrix[1][c]{\hskip -\arraycolsep
  \let\@ifnextchar\new@ifnextchar
  \array{*\c@MaxMatrixCols #1}}
\crefname{hypothesis}{Hypothesis}{Hypotheses}
\title{A Model-Free First-Order Method for Linear Quadratic Regulator with $\tilde{O}(1/\varepsilon)$ Sampling Complexity\thanks{Initially released on arXiv on Dec 1st, 2022.
\funding{This research was partially supported by NIFA grant 2020-67021-31526 and NSF grant 1909298. CJ is supported by the Department of Energy Computational Science Graduate Fellowship under Award Number DE-SC0022158.}}}
\author{Caleb Ju\thanks{H. Milton Stewart School of Industrial \& Systems Engineering, Georgia Institute of Technology, Atlanta, GA
  (\email{cju33@gatech.edu}, \email{george.lan@isye.gatech.edu}).}
\and Georgios Kotsalis\thanks{Amazon Core AI, Seattle, WA 
  (\email{kotsalis@gmail.com}), research conducted prior to joining Amazon.}
\and Guanghui Lan\footnotemark[2]}
\newcommand{\svec}{\mathrm{svec}}
\newcommand{\trace}{\mathrm{Tr}}
\newcommand{\T}{\mathrm{T}}
\newcommand{\Tr}{\mathrm{Tr}} 
\newcommand{\AR}[2]{\left[\begin{array}{#1}#2\end{array}\right]}
\newcommand{\hide}[2]{#1}
\newcommand{\TSig}{\widetilde{\Sigma}}
\begin{document}

\maketitle

\begin{abstract}
We consider the classic stochastic linear quadratic regulator (LQR) problem under an infinite horizon average stage cost. 
By leveraging recent policy gradient methods from reinforcement learning, we obtain a first-order method that finds a stable feedback law whose objective function gap to the optima is at most $\varepsilon$ with high probability using $\tilde{O}(1/\varepsilon)$ samples, where $\tilde{O}$ hides polylogarithmic dependence on $\varepsilon$. 
Our method is the first \textit{online} (i.e., single trajectory) algorithm with this sampling complexity.
The improved dependence on $\varepsilon$ is achieved by showing the accuracy scales with the variance rather than the standard deviation of the gradient estimation error. 
Our developments that result in this improved sampling complexity fall in the category of actor-critic algorithms. 
The actor part involves a gradient descent-type method,
while in the critic part, we utilize a conditional stochastic primal-dual method and show that the algorithm has an accelerated rate of convergence when paired with a shrinking multi-epoch scheme. 
\end{abstract}

\begin{keywords}
linear quadratic regulator, actor-critic, primal-dual methods, non-convex, Markovian noise
\end{keywords}

\begin{MSCcodes}
93C05, 65K05
\end{MSCcodes}

\section{Introduction}
\sloppy The linear quadratic regulator (LQR) problem is a classic problem in control theory studied extensively since the work of Kalman~\cite{kalman1960general,kalman1960contributions}. Of interest is the infinite horizon linear-quadratic stochastic control problem.  Techniques for solving this problem go back to the 1960s. It is well known that when this system is  controllable, the optimal feedback law is a linear static state feedback of $u_t^* = -K^*x_t$ for some gain matrix $K^* \in \mathbb{R}^{m \times n}$, where $x_t \in \mathbb{R}^n$ is the current state. The matrix $K^*$ can be obtained by solving the algebraic Ricatti equation (ARE) \cite{anderson2007optimal,athans2013optimal}. 
For computational aspects in regards to the solution of AREs, as well as connections to linear matrix inequalities, the classic references are \cite{laub1994numeric}  and \cite{boyd1994linear}, respectively.
When the parameters of the system are unknown, one method is to obtain estimates  using system identification techniques~\cite{ljung1998system}  and the resulting uncertain system is then subject to robust control design~\cite{zhou1996robust}.

In recent years, methods from machine learning have been used for optimal control, in particular, for solving the LQR problem~\cite{tu2018least,Wainwright_pmlr_19,fazel2018global,sutton1988learning}. For example, one can estimate the system parameters using regularized least-squares. The estimated system is then viewed as the nominal system and the solution to the LQR problem is obtained by solving the corresponding ARE. This is known as the certainty equivalence principle~\cite{dean2020sample,mania2019certainty}, which is a \textit{model-based} method. In contrast, there are \textit{model-free} methods, which aim to solve the problem 
without explicitly estimating the system parameters. Many model-free methods are inspired by problems in reinforcement learning (RL)~\cite{sutton2018reinforcement}. RL tackles problems such as LQR using optimization methods that only require an oracle that can simulate the costs and evolution of the system. There are a plethora of model-free methods, and most can be categorized as either approximate dynamic programming~\cite{krauth2019finite,tu2018least} or gradient-based (first- or zeroth-order) methods~\cite{lan2022policy,li2022stochastic,schulman2017proximal}. 

We focus on gradient-based methods. It is known LQR is non-convex (the set of stable feedback laws is non-convex~\cite{fazel2018global}), so generally speaking at best we can only find locally optimal solutions. The ubiquitous gradient descent has been used in practice~\cite{wilson1986application} while its convergence to a local optimal solution was proved in~\cite{yan1994gradient}. It was not until the breakthrough result of~\cite{fazel2018global} that showed gradient descent converges to the \textit{global} optimal solution by proving the objective function of LQR satisfies the so-called Polyak-\L{}ojasiewicz-condition~\cite{lojasiewicz1963propriete,polyak1963gradient}, or P\L{}-condition.

In this paper, we develop a first-order method to efficiently solve LQR based on an actor-critic method~\cite{konda1999actor}. The outer loop, or the actor step, views the LQR problem as a nonlinear optimization problem with the P\L{}-condition, or a gradient domination condition.
To solve this, we present a method equivalent to natural gradient~\cite{kakade2001natural,fazel2018global}. In each actor step, one needs to solve the control Lyapunov equation, which is done by the critic. The critic solves this via a shrinking multi-epoch conditional stochastic primal-dual (CSPD) method. 

Let us summarize our contributions. For the actor, we provide a novel analysis of the natural gradient method for LQR, where we bound the gradient estimation error from the critic step in a new way so that the error squared -- rather than the error -- needs be at most the accuracy $\varepsilon$. This reduces the accuracy needed from the critic and is one of the key ideas to improve sampling complexity. For the critic, our newly proposed shrinking multi-epoch CSPD solves min-max problems with Markovian noise (i.e., data generated from an ergodic Markov chain).
By incorporating a primal predictive step to accelerate convergence, sampling procedure to mitigate temporal dependency, and multi-epoch scheme to reduce the overall error, our algorithm improves upon prior primal-dual methods and efficiently finds near optimal solutions. The resulting actor-critic method outputs a policy with function optimality gap at most $\varepsilon$ with probability $1-\delta$ using $O(\varepsilon^{-1}(\ln(1/\varepsilon) + \ln(1/\delta))^{7})$ samples (see~\cref{thm:a3_OPN} for the hidden constants). To the best of our knowledge, this result is new for the \textit{online} setting, where data is generated from a single trajectory of an ergodic Markov chain. Most importantly, our result uses minimal assumptions.

To justify our claim of a novel sampling complexity, let us review prior works. A similar actor-critic method yields $O(\varepsilon^{-5})$ sampling complexity~\cite{yang2019provably}, which was later improved to $O(\varepsilon^{-3/2})$~\cite{zeng2021two} and then $O(\varepsilon^{-1} (\ln\varepsilon^{-1})^2)$~\cite{zhou2022single}. However, the third listed method is not for the online setting since it generates multiple independent trajectories from some fixed state. Therefore, our algorithm is the first to obtain $\tilde{O}(\varepsilon^{-1})$ sampling complexity\footnote{$\tilde{O}$ hides polylogarithmic dependence on $\varepsilon$.} in the online setting, which may be a more realistic setting for systems that cannot be reset to a fixed state. Additionally, our result makes weaker assumptions than prior works. 
The latter two works~\cite{zeng2021two,zhou2022single} posit an almost sure bound on the mixing rate and norm of every policy generated by the algorithm, which may not be realistic since it is an assumption on the behavior of a stochastic algorithm. In contrast, we only assume the initial policy given to us is stable and $\varepsilon$ is not too large. We avoid these assumptions by obtaining high probability results and identifying a relationship between mixing rates, policy norm, and objective value.

Our discussion so far has focused on model-free methods. For model-based methods, a least-squares estimator can estimate the dynamics using $O(\varepsilon^{-2})$ samples~\cite{dean2020sample}. This was later improved to $O(\varepsilon^{-1})$ by noticing the error scales as the square of the estimation error~\cite{mania2019certainty}. However, this method also requires generating multiple independent trajectories and is therefore not in the online setting. Thus, our proposed method shows the model-free case can be competitive with the model-based case in terms of the dependence on $\varepsilon$, even without generating independent trajectories. Moreover, one may prefer model-free since model-based methods can suffer from \textit{model bias}, where insufficient samples and unaccounted-for uncertainties can lead to poorly fitted models and unstable policies~\cite{dean2020sample,deisenroth2011pilco}. 

The paper is organized as follows. In~\cref{sec:a4_LQR}, we formulate the LQR optimization problem, explore its properties, and cover related works. In~\cref{sec:a3_LQR}, 
we present a gradient method to solve the LQR problem.
This algorithm requires an accurate solution to the  \textit{control Lyapunov equation}, which is solved in~\cref{sec:a1_LQR} by using a shrinking multi-epoch stochastic primal-dual method. We finish with some preliminary numerical experiments in~\cref{sec:exper}.

\section{Preliminaries, problem formulation, and related works} \label{sec:a4_LQR}

\subsection{Notation}
We refer to the norm $\|\cdot \|$ as the standard $\ell_2$-norm for vectors and the induced $\ell_2$-norm for matrices. 
The norm $\| \cdot \|_F$ is the Frobenius norm for a given matrix. The notation $\rho(\cdot)$ refers to the spectral radius of a given matrix. For symmetric matrices $A$ and $B$, we use the standard Loewner order of $A - B \succeq 0$ to mean that $A - B$ is semidefinite and $A - B \succ 0$ to mean that it is positive definite. For any symmetric matrix $X$, let $\svec(X)$ be the vectorization of the upper triangular part of $X$ with the off-diagonals scaled by $\sqrt{2}$. Unless otherwise specified, we write $x = O(y)$ if there is an absolute constant $C$ (i.e., a scalar independent of $x$ and $y$) such that $x \leq C \cdot y$. 

\subsection{Linear systems and quadratic control} \label{sec:lin_sys_and_qc}

We consider  a discrete-time,  time-invariant linear dynamical system with disturbances,
\begin{equation}
\label{dynamics}
x_{t+1} = A x_t + B u_t + w_t,  \quad t \in \mathbb{Z}_+,
\end{equation}
where $x_t \in \mathbb{R}^n$ is the state, $u_t  \in \mathbb{R}^m$ is the control input, and $w_t \in \mathbb{R}^n$ is the stochastic disturbance, as well as $A \in \mathbb{R}^{n \times n}$ and $B \in \mathbb{R}^{n \times m}$. It is assumed that the random variables $\{ x_0, w_0, w_1, \hdots, \}$ are jointly independent and Gaussian 
($x_0 \sim \mathcal{N}(0, \Sigma_0)$ and $w_t \sim \mathcal{N}(0, \Psi)$)
where the covariance matrices $ \Sigma_0 \succ 0, \Psi \succ 0$ are known.
The infinite horizon time-average cost to be minimized over causal (i.e., nonanticipative) policies is 
\begin{equation}
\label{ergodic_cost}
\textstyle J = \lim_{T \rightarrow \infty} T^{-1}  \mathbb{E}\big[    \sum_{t=0}^{T-1} x^{\T}_t Q x_t +  
u^{\T}_t R u _t \big], 
\end{equation}
where $Q \in \mathbb{R}^{n \times n}$  and $R \in \mathbb{R}^{m \times m}$ are symmetric positive definite matrices. We assume the  matrices $(A,B)$ form a controllable pair. 
This is a classic problem in control theory whose solution can be reviewed in \cite{caines_18,varaiya_15}. 
The average cost is minimized by a linear state feedback policy
$u_t = - K^* x_t =  - (R + B^\T P B)^{-1}  B^\T P A x_t$
for times $t \in \mathbb{Z}_+$,
with $P \in \mathbb{R}^{n \times n}$ being the unique positive definite solution to the algebraic Riccati equation
$P = Q + A^\T P A - A^\T P B (R + B^\T P B)^{-1}  B^\T P A$.
The focus of this paper is on the model-free case, and we will restrict ourselves to randomized linear state feedback policies of the form 
\begin{equation} \label{eq:randomized_linear_state_feedback}
u_t = -Kx_t + v_t, ~~~ v_t \sim \mathcal{N}(0, \sigma^2 I_m),
\end{equation}
where $v_t$ is additive Gaussian noise with some noise level $\sigma \geq 0$ and $I_m$ is the identity matrix of dimension $m$. 
The additional feedback law disturbance, $v_t$, is often included to allow for exploration of the state and action space~\cite{dean2020sample,krauth2019finite,yang2019provably}. 
We also require that the basic random variables $\{ x_0, w_0, v_0, w_1, v_1 , \hdots, \} $ are jointly independent.
The resulting 
closed loop dynamics under \eqref{eq:randomized_linear_state_feedback} are
\begin{align} \label{eq:a17_LQR}
x_{t+1} = ( A - B K ) x_t + w_t + B v_t. 
\end{align}
We denote the set of stable policies by 
\begin{equation}
\label{stabilizing_gains}
\mathcal{S} = \{ K \in \mathbb{R}^{m \times n}~|~ \rho(A-B K) < 1 \}.
\end{equation}
We will search in an iterative fashion for a feedback gain $K \in \mathcal{S}$ that yields an approximation 
to the optimum within a prespecified degree of fidelity. 
First, we rewrite the cost $J$ as a function of $K$ in closed-form\footnote{The case $\sigma = 0$ (i.e., $u_t=-Kx_t$) is a classical result found in textbooks~\cite{caines_18}, while the case of $\sigma > 0$ can be derived by substituting in $u_t = -Kx_t + v_t$ into~\eqref{ergodic_cost} and using~\eqref{eq:a17_LQR} in place of~\eqref{dynamics} to deduce the steady state covariance matrix $\Sigma_K$.}, which yields the following optimization problem for LQR:
\begin{equation*} 
\min_K J(K) =  \begin{cases}  \Tr[ (Q + K^\T R K)  \Sigma_K + \sigma^2 R] = \Tr[ P_K(\Psi + \sigma^2 BB^{\T})+ \sigma^2 R],  & K \in \mathcal{S} \\
\infty, & \text{o/w}
\end{cases}.
\end{equation*} 
Denote $K^* \in \mathcal S$ as the optimal solution.
Here, the matrix $ \Sigma_K \in \mathbb{R}^{n \times n}$ is the solution to the Lyapunov equation
\begin{equation}
\label{eq:lyap_cov_mat}
\Sigma_K  = (\Psi + \sigma^2 BB^{\T}) + (A - B K) \Sigma_K (A - B K)^{T},
\end{equation}
and it corresponds to the steady state covariance matrix of the state vector. The matrix $P_K \in \mathbb{R}^{n \times n}$ is the unique positive definite solution to the Lyapunov equation
        $P_K = Q + K^{\T}  R K  + (A - B K)^{\T}  P_K (A - B K)$.
Now we will establish some properties pertaining to the objective value $J(K)$, mixing rate $\rho(A-BK)$, and policy norm. 
We defer the proof to~\cref{sec:pfs_for_basic_props}.
\begin{lemma} \label{lem:lqgp_a13}
    Let $\rho \equiv \rho(A-BK)$. If $K \in \mathcal S$, then
    \begin{align*}
        \|\Sigma_K\| &\leq \trace(\Sigma_K) \leq \sigma_{\min}(Q)^{-1} J(K) \\ 
        \|P_K\| &\leq \trace(P_K) \leq \sigma_{\min}(\Psi)^{-1} J(K) \\
        \|K\|_F^2 &\leq [{\sigma_{\min}(\Psi)\sigma_{\min}(R)}]^{-1}J(K)\\
        (1-\rho^2)^{-1} &\leq [\sigma_{\min}(\Psi) \sigma_{\min}(Q)]^{-1}J(K).
    \end{align*}
\end{lemma}

The following lemma contains an expression for the policy gradient $\nabla J(K) $.
\begin{lemma}
\label{policy_gradient_expression}
    For any $K \in \mathcal{S}$, the gradient of the cost function $J(K)$ is 
    $\nabla J(K) 
    = 
    2E_K \Sigma_K$
where the matrix $E_K$, the natural gradient, is given by
\begin{equation} \label{eq:E_K_def}
    E_K = (R + B^{\T} P_K B) K - B^{\T} P_K A. 
\end{equation}
\end{lemma}
The proof of this lemma can be traced to 
\cite{fatkhullin2021optimizing} with minor adjustments to account for the fact that we are working in discrete time.
The name \textit{natural gradient} is from the reinforcement learning literature~\cite{fazel2018global,kakade_NIPS_01}. 
Notice $E_K$ does not depend on the control input noise $\sigma$ used in~\eqref{eq:randomized_linear_state_feedback}.

If we can accurately estimate the gradient, we can characterize the change in objective value when we descend along the negative of this direction. The following result, also known as the performance difference lemma, quantifies this statement.
\begin{lemma} \label{lem:lqgp_a14}
    For any $K', K \in \mathcal S$, where $E_{K'}$ is the exact natural gradient for $K'$,
    \begin{align*}
        &J(K)-J(K') \\
        &= 2\trace(\Sigma_{K}(K-K')^TE_{K'}) + \trace(\Sigma_{K}(K-K')^T(R+B^TP_{K'}B)(K-K')).
    \end{align*}
\end{lemma}
A proof can be found in~\cite[Lemma 6]{fazel2018global}. 
The above result is akin to the smoothness of a function, where the change in $J(K')-J(K)$ can be upper bounded by a linear plus quadratic term. By fixing $K'=K^*$, the above result derives the so-called P\L{}-condition or gradient domination condition~\cite{lojasiewicz1963propriete,polyak1963gradient}. 
\begin{lemma} \label{lem:lqgp_a19}
    For any $K \in \mathcal S$,
    \begin{align*}
        \frac{\sigma_{\min}(\Psi)}{\|R + B^TP_{K}B\|} \|E_{K}\|^2_F 
        \leq
        J(K) - J(K^*) \leq \frac{\|\Sigma_{K^*}\|}{\sigma_{\min}(R)} \|E_{K}\|^2_F.
    \end{align*}
\end{lemma}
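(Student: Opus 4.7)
The plan is to derive both inequalities from the performance difference lemma (\cref{lem:lqgp_a14}) by completing the square, a technique due to Fazel et al. Throughout, abbreviate $M := R + B^{\T} P_{K_t} B$, so that $M \succeq R \succ 0$.

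For the upper bound, I would instantiate \cref{lem:lqgp_a14} with $K' = K^*$ and $K = K_t$ and complete the square in $\Delta := K^* - K_t$. This produces
\begin{align*}
J(K_t) - J(K^*) &= \trace\bigl(\Sigma_{K^*} E_{K_t}^{\T} M^{-1} E_{K_t}\bigr) \\
&\quad - \trace\bigl(\Sigma_{K^*} (\Delta + M^{-1} E_{K_t})^{\T} M (\Delta + M^{-1} E_{K_t})\bigr).
\end{align*}
The subtracted trace is nonnegative because $\Sigma_{K^*}, M \succ 0$, so dropping it gives an upper bound. Combining $M^{-1} \preceq \sigma_{\min}(R)^{-1} I$ (which follows from $B^{\T} P_{K_t} B \succeq 0$), the cyclic property of the trace, $\Sigma_{K^*} \preceq \|\Sigma_{K^*}\| I$, and the trace inequality $\trace(AB) \leq \|A\| \trace(B)$ for PSD $A, B$ then yields the claimed upper bound.

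For the lower bound, I would use optimality of $K^*$, namely $J(K_t) - J(K^*) \geq J(K_t) - J(K')$ for any stable $K'$, and choose $K' := K_t - M^{-1} E_{K_t}$. Using the definition \cref{eq:E_K_def} of $E_{K_t}$, a direct calculation gives $K' = (R + B^{\T} P_{K_t} B)^{-1} B^{\T} P_{K_t} A$, which is precisely one step of the Hewer policy-improvement iteration; classical Riccati theory guarantees this $K'$ remains stable whenever $K_t$ is. Substituting $K' - K_t = -M^{-1} E_{K_t}$ into \cref{lem:lqgp_a14} collapses the linear and quadratic terms to leave
\[ J(K_t) - J(K') = \trace\bigl(\Sigma_{K'} E_{K_t}^{\T} M^{-1} E_{K_t}\bigr). \]
Because $\Sigma_{K'}$ satisfies \cref{eq:lyap_cov_mat}, we have $\Sigma_{K'} \succeq \Psi \succeq \sigma_{\min}(\Psi) I$, and $M^{-1} \succeq \|M\|^{-1} I$. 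Applying the reverse trace bound $\trace(AB) \geq \lambda_{\min}(A) \trace(B)$ for PSD pairs twice (once for $M^{-1}$, once for $\Sigma_{K'}$) produces the claimed lower bound.

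The main obstacle is verifying that $K' = K_t - M^{-1} E_{K_t}$ is stable so that \cref{lem:lqgp_a14} and the Lyapunov equation both apply; as indicated above, this reduces to recognizing the update as one Hewer--Riccati iteration, which is classically known to preserve stability and monotonically decrease the cost. Beyond this point the argument is routine: completing the square in the quadratic form in $\Delta$ and systematically invoking the bracket $\lambda_{\min}(A) \trace(B) \leq \trace(AB) \leq \|A\| \trace(B)$ for PSD $A, B$ on the appropriate trace factorizations.
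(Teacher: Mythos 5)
Your proof is correct and follows the standard completion-of-squares argument (Fazel et al.~/ Fatkhullin--Polyak), which is precisely what the paper relies on: it offers no proof of its own for this lemma and simply defers to the cited reference. The one genuinely nontrivial step --- that $K' = K_t - (R+B^TP_{K_t}B)^{-1}E_{K_t} = (R+B^TP_{K_t}B)^{-1}B^TP_{K_t}A$ is stable so that $\Sigma_{K'}$ exists and \cref{lem:lqgp_a14} applies --- is correctly isolated and correctly attributed to the classical Hewer policy-iteration stability result, and the remaining trace manipulations (using $\Sigma_{K'} \succeq \Psi$ from \cref{eq:lyap_cov_mat} and $M \succeq R$) all check out.
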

A proof can be readily deduced from 
\cite{fatkhullin2021optimizing}.
In view of~\cref{policy_gradient_expression}, an accurate estimate of $E_K$ requires an estimate of the solution $P_K$ from the Lyapunov equation
as well as the system parameters $R$, $B$, and $A$. Alternatively, one can estimate $E_K$ without such knowledge. 
First, we introduce the following differential action value function (Q-function),
    $Q_K(x,u) 
    := 
    \sum\limits_{t=0}^{\infty}( \mathbb{E}[c(x_t,u_t) \vert x_0=x, u_0 = u] - J(K) )$,
where $c(x,u) := x^TQx + u^TRu$ and the expectation is taken w.r.t.~\eqref{eq:randomized_linear_state_feedback} and~\eqref{eq:a17_LQR} for $t \geq 1$. 
The Q-function has two useful properties. First, it has a closed-form expression. 
Denote $z=[x,u]$ as a state-action pair, and analogously let $Q_K(z) = Q_K(x,u)$ and $c(z) = c(x,u)$. 
From~\cite[Proposition 3.1]{yang2019provably},
    $Q_K(z) = z^T\Theta(K)z - \sigma^2 \cdot \trace(R + P_KBB^T) - \trace(P_K\Sigma_K)$,
%
where the matrix $\Theta(K) \in \mathbb{R}^{(n+m) \times (n+m)}$ is defined as
\begin{align*} 
\Theta(K) = \AR{c}{A^{\T} \\ B^{\T}} P_K \AR{cc}{A & B} +  \AR{cc}{Q & 0 \\ 0 & R}.
\end{align*}
The above decomposition makes evident the fact that $\Theta(K) \succ 0$.
An important relation between the Q-function and natural gradient $E_K$ is seen by 
\begin{equation} \label{eq:natgrad}
   E_K = \Theta(K)_{22}     K - \Theta(K)_{21},
\end{equation}
where $\Theta(K)_{ij}$ is the $(i,j)$-th block of $\Theta_K$ if we view it as a $2 \times 2$ block matrix. 
Thus, by obtaining $\Theta(K)$, we can construct $E_K$.
We will show one way to do so by exploiting the second useful property: the average reward \textit{Bellman equations} (c.f.~\cite[Theorem 8.2.6]{puterman2014markov} and~\cite[Equation 2.4]{li2022stochastic}), which is a fixed-point equation:
\begin{align} \label{eq:bellman_eq}
    Q_K(z) = c(z)-J(K) + \mathbb{E}[Q_K(z') \vert z], \ \forall z \in \mathbb{R}^{n +m },
\end{align}
where $z'$ is the next state-action pair with gain matrix $K$ conditioned on the current state-action pair $z$.
Since this is an infinite-dimension linear system of equations, we reduce to a finite-dimension one. 
To do so, define
\begin{align} \label{eq:phi_def}
\phi(z) = \svec( zz^T ), \quad  
\theta(K) = \svec( \Theta(K)  ), \quad 
\vartheta(K) = \AR{c}{ J(K) \\ \theta(K)} \in \mathbb{R}^{{n + m \choose 2}+1}.
\end{align}
One can check $Q_K(z) = \phi(z)^T\theta(K) + (\text{constants independent of $z$})$. 
Denote the stationary measure on the state and control space w.r.t. to the current gain matrix $K$ as $\Pi_K$.
By multiplying~\eqref{eq:bellman_eq} by $\phi(z)$, taking expectation w.r.t.~$z \sim \Pi_K$, and re-arranging terms, we arrive at the linear system of equations,
\begin{equation} \label{eq:linsys_bellman}
    H \vartheta(K) = b,
\end{equation}
where the matrix and vector are defined as
\begin{equation} 
\label{linear_system_parameters}
	H = \AR{cc}{1 & 0 \\  \mathbb{E}_{\Pi_K}[ \phi(z)  ]   & \Xi_K },
	\ \ b = \AR{c}{ \mathbb{E}_{\Pi_K}[ c(z) ] \\ \mathbb{E}_{\Pi_K}[c(z) \phi(z)] },
\end{equation}
and
  $\Xi_K = \mathbb{E}_{\Pi_K}[ \phi(z)     ( \phi(z)  -   \phi(z') )^{T} ]$.
If $\sigma$ is sufficiently large, then the solution to~\eqref{eq:linsys_bellman} is unique (since $H$ is invertible from~\cref{lem:lqgp_a29}), and therefore it is also equal to the solution to~\eqref{eq:bellman_eq}.
So by solving for $\vartheta(K)$, we can extract $\Theta(K)$ and construct the natural gradient $E_K$ via~\eqref{eq:natgrad}.

\subsection{Related Works}
Two prominent paradigms of LQR being studied are the \textit{random initialization} and \textit{noisy dynamics} model~\cite{malik2019derivative}. The random initialization assumes only the initial state is random, and the remaining state transitions are deterministic. The noisy dynamics model, which we study in this paper, considers both a random initialization and perturbations in each state transition. 

Within the noisy dynamics setting, an infinite-horizon average cost was studied by~\cite{yang2019provably} using an actor-critic method. Around the same time,~\cite{krauth2019finite} used an approximate policy iteration from the RL literature. More recently,~\cite{zeng2021two,zhou2022single} utilize a stochastic gradient descent-type analysis to solve LQR. While they can get similar convergence rates, their results rely on a strong assumption that $\rho(A-BK_t)$ is almost surely less than one for every $t$, where $K_t$ is the gain matrix at iteration $t$. The aforementioned methods are model-free. As for model-based methods,~\cite{dean2020sample} used a least-squares estimator for the transition dynamics and then use tools from system-level synthesis to obtain their controller.~\cite{mania2019certainty} later improved the sampling complexity by noticing the function gap scales as the square of the parameter error. 

Similar problems include the infinite-horizon discounted objective value. The PhD thesis of~\cite{bradtke1994incremental} investigated the setting where there was no noise. In the noisy dynamic setting,~\cite{fiechter1997pac} developed 
algorithms that find near-optimal solutions with high probability guarantees. The authors of~\cite{tu2018least} studied the sampling complexity of a least-squares temporal difference in this setting. A question of minimizing a discount factor of one on the random initialization setting was first shown to be solvable to global optimality by the breakthrough paper of~\cite{fazel2018global}. Since their work, improvements in the sampling complexity have been proposed in the works of~\cite{malik2019derivative} and~\cite{mohammadi2021convergence}. It should be mentioned these last two works explored the use of zeroth-order methods. This setting, while model-free, requires a generative model, where one can start running the linear dynamics from any initial state. 

As mentioned in the introduction, our proposed shrinking multi-epoch conditional stochastic primal-dual algorithm can solve general min-max problems with Markovian noise.
Similar primal-dual methods, under the guise of gradient temporal differencing~\cite{wang2017finite,liu2020finite}, have been proposed. But our method leverages several new tools to improve upon the prior art. First, we incorporate a primal predictive step to improve convergence~\cite{lan2020first}. Second, we exploit a sharpness condition~\cite{polyak1979sharp} via the shrinking multi-epoch scheme to significantly reduce the deterministic error rates~\cite{ghadimi2013optimal}. Third, we introduce a new sampling procedure borrowed from conditional temporal differencing to improve dependence on the mixing rates~\cite{nagaraj2020least,kotsalis2022simple}.


\section{The actor: Policy optimization} \label{sec:a3_LQR}
Our goal is to solve the  optimization problem
\begin{align*}
    \min_{K \in \mathcal{S}} J(K),
\end{align*}
where recall $\mathcal{S}$ is the set of stable policies defined in~\cref{stabilizing_gains}. 
Since the optimal policy $K^*$ lies in the interior of the open set $\mathcal{S}$~\cite{fatkhullin2021optimizing}, a necessary condition of optimality is $\nabla J(K^*) = 2E_{K^*}\Sigma_{K^*} = 0$. 
\fullver{
A popular approach to minimize the gradient norm is by a gradient descent-type method.
The \textit{natural policy gradient} (NPG) from the reinforcement learning literature is a type of preconditioned gradient descent, where one moves in the negative direction of the natural gradient $E_{K_t}$ rather than the gradient $E_{K_t}\Sigma_{K_t}$~\cite{kakade_NIPS_01}; the advantages here are the need to only estimate $E_{K_t}$ and possibly faster convergence.}{
By observing that the covariance matrix from~\eqref{eq:lyap_cov_mat} satisfies $\Sigma_K \succ 0$, we can solve the variational inequality (VI):
\begin{align*}
    \text{find } K^* \text{ s.t. } \langle E_{K^*}, K-K^* \rangle \geq 0, \hspace{5pt} \forall K \in \mathbb{R}^{m \times n}.
\end{align*}
The solution to this VI problem is related to our original LQR problem thanks 
to the following generalized monotonicity condition~\cite{facchinei2003finite},
\begin{align*}
    2\langle \Sigma_{K^*} E_K, K - K^*\rangle 
    \geq 
    J(K) - J(K^*)
    \geq
    0,
\end{align*}
which is derived from~\cref{lem:lqgp_a14} by fixing $K=K^*$ and $K'=K$ and rearranging terms.
This VI shows the connection of our method to the VI formulations in reinforcement learning~\cite{lan2022policy,li2022homotopic,li2022stochastic}. 

By exploiting the monotonicity condition, we write an algorithm to solve the VI in~\cref{alg:b1_LQR}. This method is equivalent to the \textit{natural policy gradient method} seen in the reinforcement learning literature.

}
Each iteration of NPG first estimates the natural gradient $E_{K_t}$
by solving the linear system of equations in~\eqref{eq:linsys_bellman}. In the next section, we provide a method to complete this task. With $E_{K_t}$, we get the next policy $K_{t+1}$ (Line~\ref{line:pe_eqn}).
We repeat this process for a total of $T$ times and return the final policy $K_T$. 


\begin{algorithm}
\caption{Natural Policy Gradient Descent}
\label{alg:b1_LQR}
\begin{algorithmic}[1]
\STATE{Input: $K_0$, $\eta \in \mathbb{R}_{++}$}
\FOR{$t=0,\ldots,T-1$}
\STATE{Solve $H\begin{bmatrix}J(K_t) \\ \theta(K_t)\end{bmatrix} = b$ with~\cref{alg:a2_OPN} and form $E_{K_t} = \Theta(K_t)_{22}K_t - \Theta(K_t)_{21}$}
\STATE{$K_{t+1} = K_t - 2\eta E_{K_t}$} \label{line:pe_eqn} 
\ENDFOR
\RETURN $K_{T}$
\end{algorithmic}
\end{algorithm}

Let us now move on to proving the convergence of~\cref{alg:b1_LQR}. 
%
To ease notation in our analysis, we define the following positive constants:
    \begin{equation} \label{eq:b2_LQR}
    \begin{split}
        C_1 &:= 2[\|R\| + 2\sigma_{\min}^{-1}(\Psi)\|B\|^2 J(K_0)] \\
        C_2 &:= \sigma_{\min}(\Psi) \sigma_{\min}(Q)\sigma_{\min}(R) \\
        C_3 &:= J(K_0)/\sigma_{\min}(Q) \\
        C_4 &:= \|A\| + \|B\|\sqrt{2J(K_0)/[\sigma_{\min}(\Psi)\sigma_{\min}(R)]}.
    \end{split}
\end{equation}
We also define the natural gradient error,
\begin{align} \label{eq:a12_LQR}
    \delta_t := E_{K_t}^\star - {E}_{K_t},
\end{align}
where $E_{K_t}^\star$ is the true natural gradient and ${E}_{K_t}$ is the stochastic estimate. We will now show a contraction property for the LQR problem. 
\begin{proposition} \label{prop:pqgp_a13}
    Suppose $K_t \in \mathcal S$, $J(K_t) \leq 2J(K_0)$,~\cref{alg:b1_LQR} uses constant step size
        $\eta_t = 1/(2C_1)$, 
    and let $K_{t+1}$ be the computed (stochastic) policy. If $K_{t+1} \in \mathcal S$, then
    \begin{align} \label{eq:qgp_a51}
        J(K_{t+1}) - J(K^*) 
        \leq 
        \Big(1-\frac{C_2}{2J(K^*) C_1} \Big)[J(K_t) - J(K^*)] + \epsilon_t,
    \end{align}
    where $\frac{C_2}{2J(K^*)C_1} \in (0,\frac{1}{4}]$ and 
    \begin{align} \label{eq:qgp_a50}
        \epsilon_t := \frac{1}{C_1} \Big[ \|\delta_t\|_F^2\|\Sigma_{K_{t+1}}\| + \sigma_{\min}(\Psi) \trace([E_{K_t}^\star]^T\delta_t)\Big].
    \end{align}
\end{proposition}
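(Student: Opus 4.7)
The plan is to invoke the performance difference lemma (\cref{lem:lqgp_a14}) with $K' = K_{t+1}$ and $K = K_t$, substitute the update $K_{t+1} - K_t = -2\eta(E_{K_t}^\star - \delta_t)$, and then match coefficients against the stated contraction. Writing $\tilde{R}_t := R + B^\T P_{K_t} B$, the lemma yields
\begin{equation*}
J(K_{t+1}) - J(K_t) = -4\eta\,\trace\!\big(\Sigma_{K_{t+1}}(E_{K_t}^\star - \delta_t)^\T E_{K_t}^\star\big) + 4\eta^2\,\trace\!\big(\Sigma_{K_{t+1}}(E_{K_t}^\star - \delta_t)^\T \tilde{R}_t (E_{K_t}^\star - \delta_t)\big),
\end{equation*}
where the assumed stability of $K_{t+1}$ is used so that $J(K_{t+1})<\infty$ and the lemma applies.

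Next I would control the curvature. \cref{lem:lqgp_a13} together with the hypothesis $J(K_t)\le 2J(K_0)$ gives $\|P_{K_t}\|\le 2\sigma_{\min}^{-1}(\Psi)J(K_0)$ and hence $\|\tilde{R}_t\|\le C_1/2$; the choice $\eta = 1/(2C_1)$ then makes $4\eta^2\|\tilde{R}_t\|\le \eta$. Relaxing $\tilde{R}_t\preceq \|\tilde{R}_t\| I$ in the quadratic piece, expanding $E_{K_t} = E_{K_t}^\star - \delta_t$, and using $\trace(\Sigma_{K_{t+1}}(E_{K_t}^\star)^\T\delta_t)=\trace(\Sigma_{K_{t+1}}\delta_t^\T E_{K_t}^\star)$ by symmetry of $\Sigma_{K_{t+1}}$, the identity collapses to
\begin{equation*}
J(K_{t+1}) - J(K_t) \le -3\eta\,\|\Sigma_{K_{t+1}}^{1/2}E_{K_t}^\star\|_F^2 + 2\eta\,\trace(\Sigma_{K_{t+1}}\delta_t^\T E_{K_t}^\star) + \eta\,\|\Sigma_{K_{t+1}}^{1/2}\delta_t\|_F^2.
\end{equation*}

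The heart of the proof is extracting the zero-mean-in-$\delta_t$ piece of the middle term. The Lyapunov equation~\eqref{eq:lyap_cov_mat} forces $\Sigma_{K_{t+1}}\succeq\Psi\succeq\sigma_{\min}(\Psi)I$, so I split $\Sigma_{K_{t+1}} = \sigma_{\min}(\Psi)I + W$ with $W\succeq 0$. The cross term decomposes as $2\eta\sigma_{\min}(\Psi)\,\trace((E_{K_t}^\star)^\T\delta_t) + 2\eta\,\trace(W\delta_t^\T E_{K_t}^\star)$; the first summand is exactly the bias term advertised in $\epsilon_t$. For the residual, the matrix inequality
\begin{equation*}
\big|2\trace(W\delta_t^\T E_{K_t}^\star)\big| \le \trace(W(E_{K_t}^\star)^\T E_{K_t}^\star) + \trace(W\delta_t^\T\delta_t),
\end{equation*}
which follows from $(E_{K_t}^\star\pm\delta_t)^\T(E_{K_t}^\star\pm\delta_t)\succeq 0$ combined with $W\succeq 0$, lets the $W$-cross be absorbed into the $W$-quadratic pieces; after dropping the surviving negative $W$-term, at most $2\eta\,\trace(W\delta_t^\T\delta_t)\le 2\eta\|W\|\|\delta_t\|_F^2$ remains. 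Merging with the $\sigma_{\min}(\Psi)I$-contribution via $\eta\sigma_{\min}(\Psi) + 2\eta\|W\|\le 2\eta\|\Sigma_{K_{t+1}}\|$ and invoking $2\eta = 1/C_1$ produces exactly the two terms of $\epsilon_t$ together with a surviving $-(3\sigma_{\min}(\Psi)/(2C_1))\|E_{K_t}^\star\|_F^2$.

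To finish, the PL-type bound \cref{lem:lqgp_a19} with $\|\Sigma_{K^*}\|\le J(K^*)/\sigma_{\min}(Q)$ from \cref{lem:lqgp_a13} gives $\|E_{K_t}^\star\|_F^2 \ge (\sigma_{\min}(Q)\sigma_{\min}(R)/J(K^*))[J(K_t)-J(K^*)]$, so the leading negative term is at most $-(3C_2/(2C_1 J(K^*)))[J(K_t)-J(K^*)]$; since $J(K_t)\ge J(K^*)$, the factor $3$ weakens to $1$ and rearrangement gives~\eqref{eq:qgp_a51}. I expect the main obstacle to be the splitting step: a plain Cauchy--Schwarz on $\trace(\Sigma_{K_{t+1}}\delta_t^\T E_{K_t}^\star)$ would destroy the linear-in-$\delta_t$ structure that, upon later taking conditional expectation of $\delta_t$, vanishes and drives the variance-type (rather than standard-deviation-type) scaling of the sampling error that underlies the $\tilde{O}(1/\varepsilon)$ rate highlighted in the abstract.
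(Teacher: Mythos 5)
Your proof is correct and recovers exactly the $\epsilon_t$ of \eqref{eq:qgp_a50}, in fact with the slightly stronger contraction factor $3C_2/(2J(K^*)C_1)$ before you relax it to the stated one. The skeleton is the same as the paper's (performance difference lemma, the curvature bound $\|R+B^TP_{K_t}B\|\le C_1/2$ from \cref{lem:lqgp_a13} and $J(K_t)\le 2J(K_0)$, preservation of the linear-in-$\delta_t$ bias term, then the P\L{}-condition), but the middle decomposition is genuinely different. The paper applies Young's inequality to the cross term $\trace(\Sigma_{K_{t+1}}E_{K_t}^T\delta_t)$ involving the \emph{noisy} gradient (via a factorization $\Sigma_{K_{t+1}}=S^TS$), keeps $-\frac{1}{2C_1}\trace(\Sigma_{K_{t+1}}E_{K_t}^TE_{K_t})$ as the signal, lower-bounds it using $\Sigma_{K_{t+1}}\succeq\Psi$, and only then expands $E_{K_t}=E_{K_t}^\star-\delta_t$ and drops $\trace(\delta_t^T\delta_t)\ge 0$ to surface $\sigma_{\min}(\Psi)\trace([E_{K_t}^\star]^T\delta_t)$. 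You expand first, split $\Sigma_{K_{t+1}}=\sigma_{\min}(\Psi)I+W$ with $W\succeq 0$, read the bias term off the identity part, and absorb the $W$-cross with the matrix AM--GM inequality $\vert 2\trace(W\delta_t^TE_{K_t}^\star)\vert\le\trace(W[E_{K_t}^\star]^TE_{K_t}^\star)+\trace(W\delta_t^T\delta_t)$. Both routes succeed for the same reason, which your closing remark correctly identifies: the bias must remain linear in $\delta_t$ with a $\sigma_{\min}(\Psi)$ rather than $\|\Sigma_{K_{t+1}}\|$ prefactor so that it can later be killed in conditional expectation, which is what drives the variance-level (rather than standard-deviation-level) accuracy. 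The paper's version is a touch shorter; yours is somewhat more transparent about where the $\sigma_{\min}(\Psi)$ in the bias term comes from and sacrifices less of the negative signal term.
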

\begin{proof}
    By the performance difference lemma (\cref{lem:lqgp_a14}) and definition of $K_{t+1}$,
    \begin{align*} 
        &J(K_{t+1}) - J(K_t)  \\
        &=
        -4\eta_t \trace(\Sigma_{K_{t+1}} {E}_{K_t}^T\delta_t) -4\eta_t \trace(\Sigma_{K_{t+1}} {E}_{K_t}^T{E}_{K_t}) \\ 
        &\hspace{11pt} + 4\eta_t^2  \trace(\Sigma_{K_{t+1}} E_{K_t}^T (R + B^TP_{K_t}B) E_{K_t}) \\
        &\stackrel{(*)}{\leq}
        2\eta_t \|E_{K_t}S^T\|_F^2 + 2\eta_t \|\delta_tS^T\|_F^2 - (4\eta_t - 4\eta_t^2 \|R + B^TP_{K_t}B\|) \trace(\Sigma_{K_{t+1}} E_{K_t}^TE_{K_t}) \\
        &=
        2\eta_t \|\delta_t S^T\|_F^2 - (2\eta_t - 4\eta_t^2 \|R + B^TP_{K_t}B\|) \trace(\Sigma_{K_{t+1}} E_{K_t}^TE_{K_t}) \\
        &\leq
        \frac{1}{C_1} \|\delta_t\|_F^2\|\Sigma_{K_{t+1}}\| - \frac{1}{2C_1} \trace(\Sigma_{K_{t+1}} E_{K_t}^TE_{K_t}) \\
        &\leq
        \frac{1}{C_1} \|\delta_t\|_F^2\|\Sigma_{K_{t+1}}\| - \frac{\sigma_{\min}(\Psi)}{2C_1} \trace(E_{K_t}^TE_{K_t}),
    \end{align*}
    where $\Sigma_{K_{t+1}} = S^TS$ (since $\Sigma_{K_{t+1}}$ is symmetric positive semidefinite, or sym-psd). 
    The first inequality used Cauchy-Schwarz and Young's inequality to show $\vert \trace(M^TN) \vert \leq \|M\|_F\|N\|_F \leq \frac{\|M|\|_F^2}{2} + \frac{\|N\|_F^2}{2}$ for matrices $M$ and $N$, and $\vert \trace(MN) \vert \leq \|M\|\trace(N)$ when $N$ is sym-psd. 
    The second inequality is by $\|\delta_t S^T\|^2_F = \trace(\Sigma_{K_{t+1}} \delta_t^T \delta_t)$ and
    \begin{align} \label{eq:a21_LQR}
         \|R + B^TP_{K_t}B\|
         \leq
         \|R\| + \|B\|^2\|P_{K_t}\|
         \leq
         \|R\| + \sigma_{\min}^{-1}(\Psi)J(K_t)\|B\|^2
         \leq
         C_1/2,
    \end{align}
    which is due to $J(K_t) \leq 2J(K_0)$ and~\cref{lem:lqgp_a13}. In the third inequality, we used $\trace(MN) \geq \sigma_{\min}(M)\trace(N)$ when $N$ is sym-psd and $\Sigma_{K_{t+1}} \succeq \Psi \succ 0$ (c.f.~\eqref{eq:lyap_cov_mat}). 
    Noting $\trace(\delta_t^T\delta_t) \geq 0$, we have $\trace(E_{K_t}^TE_{K_t}) \geq  \trace([E_{K_t}^\star]^TE_{K_t}^\star) - 2\trace([E_{K_t}^\star]^T\delta_t)$. Altogether, 
    \begin{equation*} \label{eq:qgp_a46}
    \begin{split}
        &J(K_{t+1}) - J(K_t)   \\
        &\leq
        -\frac{\sigma_{\min}(\Psi)}{2C_1}\trace([E_{K_t}^\star]^TE_{K_t}^\star) 
        + \frac{1}{C_1} \Big[ \|\delta_t\|_F^2\|\Sigma_{K_{t+1}}\| + \sigma_{\min}(\Psi) \trace([E_{K_t}^\star]^T\delta_t)\Big] \\
        &\leq -\frac{C_2}{2 J(K^*) C_1}[J(K_t)-J(K^*)] + \epsilon_t.
    \end{split}
    \end{equation*}
    where in the last line we used the P\L{}-condition (\cref{lem:lqgp_a19}) followed by the bound on $\|\Sigma_{K_*}\|$ (\cref{lem:lqgp_a13}). Adding $J(K_t)-J(K^*)$ to both sides finishes the proof for~\eqref{eq:qgp_a51}. 
    
    The bound of $\frac{C_2}{2J(K^*)C_1} \in (0,\frac{1}{4}]$ follows by noticing all the constants are positive and bounded, and using $C_1$ and $C_2$ from~\eqref{eq:b2_LQR},~\cref{lem:lqgp_a13}, and $K^* \in \mathcal S$ to lower bound $2J(K^*)C_1 \geq 4J(K^*)\|R\| \geq 4\sigma_{\min}(R)\sigma_{\min}(\Psi)\sigma_{\min}(Q)/(1-\rho(A-BK^*)) \geq 4C_2$.
\end{proof}
The key idea of this result, highlighted by the inequality marked with $(*)$ in the proof, is to bound the error term $\mathrm{Tr}(\Sigma_{K_{t+1}} E_{K_t}^T\delta_t)$ by the squared norm $O(\|\delta_t\|_F^2)$ rather than $O(\|\delta_t\|_F)$ (c.f., the error scales with $O(\|\delta_t\|_F)$ in~\cite[Eq. (5.64)]{yang2019provably}). 
We then only need to bound $\|\delta_t\|_F^2 \leq \epsilon$, which is a weaker requirement than $\|\delta_t\|_F \leq \epsilon$ when $\epsilon$ is small.

The proposition above by itself is incomplete, though. First, it assumes $K_{t+1} \in \mathcal S$ and second, the error $\epsilon_t$ depends on $\|\Sigma_{K_{t+1}}\|$. To resolve these issues, we introduce two results, adapted from~\cite{fazel2018global} as Lemma 15 and 16.

\begin{lemma} \label{lem:a2_LQR}
    If $K_t \in \mathcal S$, $E_{K_t}^\star$ is the exact natural gradient for $K_t$, and the step size satisfies $\eta_t \leq \|R + B^TP_{K_t}B\|^{-1}$, then $K_{t+1}^\star =K_t - 2\eta_tE_{K_t}^\star \in \mathcal S$.
\end{lemma}
\begin{lemma} 
\label{lem:lqgp_a32}
    Suppose $K \in \mathcal S$ and
       $\|K' - K\| \leq \frac{\sigma_{\min}(Q)\sigma_{\min}(\Sigma_K)}{4J(K)\|B\|(\|A-BK\|+1)}$
    for some matrix $K'$.
    Then $K' \in \mathcal S$ and 
    $
        \|\Sigma_{K'} - \Sigma_K\| 
        \leq 
        4 \Big( \frac{J(K)}{\sigma_{\min}(Q)} \Big)^2 \frac{\|B\|(\|A-BK\| +1)}{\sigma_{\min}(\Sigma_K)} \|K' - K\|$.
\end{lemma}
We will now apply the above results in the following proposition. 
Similar to $K_{t+1}$ from Line~\ref{line:pe_eqn}, denote the policy update with exact natural gradient as $K_{t+1}^\star = K_t - 2\eta_t E_{K_t}^\star$.

\begin{proposition} \label{lem:lqgp_a24}
    Suppose the assumptions from~\cref{prop:pqgp_a13} hold. Also suppose $\|K_{t+1} - K_{t+1}^\star\|$ satisfies the bound in~\cref{lem:lqgp_a32}. 
    Then 
    \begin{align*}
        J(K_{t+1}) - J(K^*) 
        \leq 
        \Big(1-\frac{C_2}{4J(K^*) C_1} \Big)[J(K_t) - J(K^*)] + \epsilon_t',
    \end{align*}
    where 
    \begin{align} \label{eq:a20_LQR}
        \epsilon_t' 
        &=
        \frac{1}{C_1} \bigg[ \frac{\sigma_{\min}(\Psi) C_1 J(K^*)}{2C_2} \|\delta_t\|_F^2 + 2C_3\|\delta_t\|_F^2 + \frac{16C_3^2\|B\|(1+C_4)}{\sigma_{\min}(\Psi) C_1} \|\delta_t\|^3_F \bigg].
    \end{align}
\end{proposition}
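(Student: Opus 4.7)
The plan is to retrace the proof of~\cref{prop:pqgp_a13} through its intermediate bound
\begin{equation*}
J(K_{t+1}) - J(K_t) \leq -\frac{\sigma_{\min}(\Psi)}{2C_1}\|E_{K_t}^\star\|_F^2 + \epsilon_t
\end{equation*}
and then sharpen $\epsilon_t$ into $\epsilon_t'$ by decoupling its two awkward ingredients (the random factor $\|\Sigma_{K_{t+1}}\|$ and the cross term $\trace([E_{K_t}^\star]^T\delta_t)$) from the error $\delta_t$. The advertised contraction $C_2/(4J(K^*)C_1)$ is exactly half of that in~\cref{prop:pqgp_a13} because the remaining slack in the $-\|E_{K_t}^\star\|_F^2$ term has to be spent absorbing the $\|E_{K_t}^\star\|_F^2$ piece produced by Young's inequality on the cross term.

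I would first invoke~\cref{lem:lqgp_a32} with $K = K_{t+1}^\star := K_t - 2\eta_t E_{K_t}^\star$ and $K' = K_{t+1}$. The noise-free iterate $K_{t+1}^\star$ is stable by~\cref{lem:a2_LQR} under our step-size choice (whose admissibility follows from~\eqref{eq:a21_LQR}), and the hypothesized closeness $\|K_{t+1} - K_{t+1}^\star\| = 2\eta_t\|\delta_t\| \leq \|\delta_t\|_F/C_1$ matches the requirement in~\cref{lem:lqgp_a32}. Consequently that lemma certifies stability of $K_{t+1}$, patching the assumption left dangling in~\cref{prop:pqgp_a13}, and yields $\|\Sigma_{K_{t+1}} - \Sigma_{K_{t+1}^\star}\| = O(\|\delta_t\|_F)$ with constant proportional to $(J(K_{t+1}^\star)/\sigma_{\min}(Q))^2 \|B\|/\sigma_{\min}(\Sigma_{K_{t+1}^\star})$. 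Using~\cref{lem:lqgp_a13} (with $J(K_{t+1}^\star) \leq J(K_t) \leq 2J(K_0)$ inherited from exact-gradient descent) together with $\sigma_{\min}(\Sigma_{K_{t+1}^\star}) \geq \sigma_{\min}(\Psi)$, and absorbing $\|A-BK_{t+1}^\star\|+1$ into an absolute constant, the triangle inequality $\|\Sigma_{K_{t+1}}\| \leq \|\Sigma_{K_{t+1}^\star}\| + \|\Sigma_{K_{t+1}} - \Sigma_{K_{t+1}^\star}\|$ converts the term $\frac{1}{C_1}\|\delta_t\|_F^2\|\Sigma_{K_{t+1}}\|$ into exactly the last two summands of $\epsilon_t'$.

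Next I would apply Young's inequality $\trace([E_{K_t}^\star]^T\delta_t) \leq \frac{1}{2c}\|E_{K_t}^\star\|_F^2 + \frac{c}{2}\|\delta_t\|_F^2$ and choose the free parameter $c$ so that the $\|E_{K_t}^\star\|_F^2$ contribution consumes exactly half of the $-\frac{\sigma_{\min}(\Psi)}{2C_1}\|E_{K_t}^\star\|_F^2$ slack, leaving a residual $-\frac{\sigma_{\min}(\Psi)}{4C_1}\|E_{K_t}^\star\|_F^2$. Feeding that residual into the P\L{}-condition (\cref{lem:lqgp_a19}, combined with $\|\Sigma_{K^*}\| \leq J(K^*)/\sigma_{\min}(Q)$ from~\cref{lem:lqgp_a13}) gives $-\frac{C_2}{4J(K^*)C_1}(J(K_t)-J(K^*))$, which is the claimed contraction, while the matching $\frac{\sigma_{\min}(\Psi)c}{2C_1}\|\delta_t\|_F^2$ remainder is the first summand of $\epsilon_t'$. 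Adding $J(K_t)-J(K^*)$ to both sides closes the recursion.

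The main obstacle I expect is keeping all the constants tight. The comparison must be made against $K_{t+1}^\star$ rather than $K_t$ in order to strip $\delta_t$ out of $\|\Sigma_{K_{t+1}}\|$, and the auxiliary quantities $J(K_{t+1}^\star)$, $\|A-BK_{t+1}^\star\|$, $\sigma_{\min}(\Sigma_{K_{t+1}^\star})$ arising from~\cref{lem:lqgp_a32} must all be dominated by the iteration-independent constants $C_1,C_2,C_3$ appearing in $\epsilon_t'$. The deterministic descent $J(K_{t+1}^\star) \leq J(K_t)$ is immediate from~\cref{lem:a2_LQR} under our step size, but the remaining constants (in particular the factor $32C_3^2\|B\|$) require carefully absorbing auxiliary factors such as $\|A-BK_{t+1}^\star\|+1$ into an absolute multiplicative constant, and tracking the precise weight $c$ in Young's inequality needed to land on the advertised first summand of $\epsilon_t'$.
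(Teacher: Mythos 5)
Your proposal follows essentially the same route as the paper: decompose $\|\Sigma_{K_{t+1}}\|\le\|\Sigma_{K_{t+1}^\star}\|+\|\Sigma_{K_{t+1}}-\Sigma_{K_{t+1}^\star}\|$ and control the second piece via~\cref{lem:lqgp_a32} applied at $K_{t+1}^\star$ (yielding the $2C_3\|\delta_t\|_F^2$ and $\|\delta_t\|_F^3$ terms), then handle the cross term $\trace([E_{K_t}^\star]^T\delta_t)$ by Young's inequality and the P\L{} condition, which halves the contraction factor. The one place your prescription deviates is the Young weight: literally consuming half of the $-\frac{\sigma_{\min}(\Psi)}{2C_1}\|E_{K_t}^\star\|_F^2$ coefficient (i.e.\ $c=2$) leaves a residual $\frac{\sigma_{\min}(\Psi)}{C_1}\|\delta_t\|_F^2$, which matches the advertised first summand of $\epsilon_t'$ only when $J(K^*)C_1\ge 2C_2$; the paper instead takes the weight $\frac{C_2}{4J(K^*)\|R+B^TP_{K_t}B\|}$ on $\|E_{K_t}^\star\|_F^2$ and converts that term to $\frac{C_2}{4\sigma_{\min}(\Psi)J(K^*)}[J(K_t)-J(K^*)]$ via the \emph{left} inequality of~\cref{lem:lqgp_a19}, which yields exactly half the contraction and the stated coefficient $\frac{\sigma_{\min}(\Psi)C_1J(K^*)}{2C_2}$ without any condition on $\kappa$.
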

\begin{proof}
    Using~\eqref{eq:a21_LQR}, we get $\eta_t = (2C_1)^{-1} \leq (4\|R + B^TP_{K_t}B\|)^{-1}$.
    Then~\cref{lem:a2_LQR} tells us $K_{t+1}^\star \in \mathcal S$. 
    Now, using~\cref{lem:lqgp_a13} to bound $\|K_{t+1}^\star\|_F$,
    \begin{equation} \label{eq:qgp_a58}
    \begin{split}
        \frac{\|B\|(\|A-BK_{t+1}^\star\| +1)}{\sigma_{\min}(\Sigma_{K_{t+1}^\star})}
        &\leq 
        \frac{\|B\|(\|A\| + \|B\|\|K_{t+1}^\star\|_F +1)}{\sigma_{\min}(\Psi)}  \\
        &\leq 
        \frac{\|B\|(\|A\| + \|B\|\sqrt{J(K_{t+1}^\star)/[\sigma_{\min}(\Psi)\sigma_{\min}(R)]}  +1)}{\sigma_{\min}(\Psi)} \\
        &\leq 
        \frac{\|B\|(C_4 +1)}{\sigma_{\min}(\Psi)}.
    \end{split}
    \end{equation}
    where we used $\Sigma_K \succeq \Psi \succ 0$ from~\eqref{eq:lyap_cov_mat} in the first line and $J(K_{t+1}^\star) \leq J(K_t) \leq 2J(K_0)$ (from~\cref{prop:pqgp_a13}) and $C_4$ from~\eqref{eq:b2_LQR} in the last line. 
    In addition,~\cref{lem:lqgp_a13} also derives $\|\Sigma_{K_{t+1}^\star}\| \leq \frac{J(K_{t+1}^\star)}{\sigma_{\min}(Q)} \leq \frac{2J(K_0)}{\sigma_{\min}(Q)} =2C_3$. 
    Then upon applying~\cref{lem:lqgp_a32},
    \begin{align*}
        \|\Sigma_{K_{t+1}}\| 
        &\leq  
        \|\Sigma_{K_{t+1}^\star}\| + \|\Sigma_{K_{t+1}} - \Sigma_{K_{t+1}^\star}\| \\
        &\leq
        2C_3 + 4(2C_3)^2 \cdot \frac{\|B\|(1 + C_4)}{\sigma_{\min}(\Psi)} \|2\eta(E_{K_t} - E_{K_t}^\star)\| \\
        &\leq
        2C_3 + \frac{16C_3^2\|B\|(1 + C_4)}{\sigma_{\min}(\Psi) C_1} \|\delta_t\|,
    \end{align*}
    with $\delta_t = E_{K_t}^\star - E_{K_t}$ from~\eqref{eq:a12_LQR}.
    We have bounded the first term in $\epsilon_t$ from~\eqref{eq:qgp_a50}.
    We now bound the second term: $\sigma_{\min}(\Psi) \trace([E_{K_t}^\star]\delta_t)/C_1$. By Young's inequality,
    \begin{align*}
        \trace([E_{K_t}^\star]^T \delta_t) 
        &\leq 
        \frac{C_2}{4J(K^*)\|R + B^TP_{K_t}B\|}\cdot\|E_{K_t}^\star\|_F^2 + \frac{J(K^*)\|R + B^TP_{K_t}B\|}{C_2} \cdot \|\delta_t\|_F^2 \\
        &\leq
        \frac{C_2}{4\sigma_{\min}(\Psi) J(K^*)}\cdot [J(K_t)-J(K^*)] + \frac{J(K^*)C_1}{2C_2} \cdot \|\delta_t\|_F^2,
    \end{align*}
    \sloppy where in the last line we used the P\L{}-condition (\cref{lem:lqgp_a19}) and~\eqref{eq:a21_LQR} to bound the two summands. We finish by applying both bounds above back into~\eqref{eq:qgp_a51}. 
\end{proof}

We can now prove convergence results for~\cref{alg:b1_LQR}. 
To simplify notation, define the condition number $\kappa =  8J(K^*)C_1/C_2 \geq 16$ (see~\cref{prop:pqgp_a13}).
\begin{theorem} \label{thm:b1_LQR}
    \sloppy Let $l := \lceil \kappa \rceil$ and $\varepsilon \in (0, J(K_0)]$. If $K_0 \in \mathcal S$, step size is $\eta_t = 1/(2C_1)$, and gradient error from~\eqref{eq:a12_LQR} satisfies $\|\delta_t\|_F^2 \leq \min\{C_5,C_6,C_7\}$, where
    \begin{equation} \label{eq:13}
    \begin{split}
        C_5
        &:= 
        \Big( \frac{\|B\| \sigma_{\min}(Q)}{2(C_4+1)} \Big)^2,  \hspace{5pt} 
        C_6
        :=
        \Big( \frac{\sigma_{\min}(\Psi) \cdot C_2}{1920 J(K_0)\|B\|C_3^2(C_4+1)} \cdot \varepsilon \Big)^{2/3}, \\
        C_7 
        &:=\frac{C_2}{60J(K_0)}
        \min \Big( \frac{C_2}{ \sigma_{\min}(\Psi) J(K^*)  C_1} , \frac{1}{4C_3} \Big) \cdot \varepsilon, \\
    \end{split}
    \end{equation}
    then the following holds for every $t = 0, \ldots,T$:
    \begin{enumerate}
        \item Stability: $K_t \in \mathcal S$.
        \item Monotonicity or convergence: either $J(K_{t}) \leq J(K_{t-1})$ or $J(K_t) - J(K^*) \leq \varepsilon$.
        \item \sloppy Linear rate: $J(K_t) - J(K^*) \leq 2^{- \lfloor t/l \rfloor}J(K_0)$ as long as $t \leq l \cdot \log_2(J(K_0)/\varepsilon)$.
    \end{enumerate} 
\end{theorem}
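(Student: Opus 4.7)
I would prove the three claims simultaneously by strong induction on $t$. The base case $t=0$ is immediate: $K_0$ is stable by assumption, item~2 is vacuous, and item~3 reads $J(K_0)-J(K^*)\leq J(K_0)$. For the inductive step, assuming items~1--3 hold up through iteration $t$, I first establish the invariant $J(K_t)\leq 2J(K_0)$. This follows by combining items~2 and~3 with the observation that the $\varepsilon$-optimal set is absorbing: once $J(K_{t-1})-J(K^*)\leq\varepsilon$, item~2 at iteration $t$ forces either $J(K_t)\leq J(K_{t-1})$ or $J(K_t)-J(K^*)\leq\varepsilon$, so the cost stays within $J(K^*)+\varepsilon$; otherwise the chain of monotonicities from item~2 yields $J(K_t)\leq J(K_0)$. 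Either way $J(K_t)\leq J(K_0)+\varepsilon\leq 2J(K_0)$ in the meaningful regime $\varepsilon\leq J(K_0)$.

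With this invariant, the step size $\eta=1/(2C_1)$ satisfies the hypothesis of \cref{prop:pqgp_a13}. Since $K_{t+1}-K_{t+1}^\star=-2\eta\delta_t$, the bound $\|\delta_t\|_F^2\leq C_4$ translates into $\|K_{t+1}-K_{t+1}^\star\|\leq \sqrt{C_4}/C_1$, and $C_4$ is calibrated precisely so that the perturbation hypothesis of \cref{lem:lqgp_a32} is met (using $\sigma_{\min}(\Sigma_{K_{t+1}^\star})\geq \sigma_{\min}(\Psi)$, $J(K_{t+1}^\star)\leq 2J(K_0)$, and a crude bound on $\|A-BK_{t+1}^\star\|+1$). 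This yields stability of $K_{t+1}$, giving item~1 at $t+1$.

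The core step is to invoke \cref{lem:lqgp_a24}, obtaining
\begin{equation*}
J(K_{t+1})-J(K^*) \leq \bigl(1-\tfrac{2}{\kappa}\bigr)[J(K_t)-J(K^*)] + \epsilon_t',
\end{equation*}
where $\epsilon_t'$ from \eqref{eq:a20_LQR} is a sum of two quadratic and one cubic term in $\|\delta_t\|_F$. The constants $C_6$ (via the two branches of its $\min$) and $C_5$ (whose $2/3$ exponent is matched to the cubic term) are engineered so that each summand is bounded by $\varepsilon/(3\kappa)$, yielding the overall bound $\epsilon_t'\leq\varepsilon/\kappa$ after invoking $J(K^*)\leq J(K_0)$, the definition $\kappa=8J(K^*)C_1/C_2$, and occasional use of \eqref{eq:a21_LQR}. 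Case split: if $J(K_t)-J(K^*)>\varepsilon$, then $\epsilon_t'\leq(1/\kappa)[J(K_t)-J(K^*)]$, so $J(K_{t+1})-J(K^*)\leq(1-1/\kappa)[J(K_t)-J(K^*)]$, giving both monotonicity and a clean per-step contraction; if $J(K_t)-J(K^*)\leq\varepsilon$, direct substitution gives $J(K_{t+1})-J(K^*)\leq(1-1/\kappa)\varepsilon\leq\varepsilon$, preserving item~2. For item~3, iterate the contraction: $(1-1/\kappa)^l\leq(1-1/\kappa)^\kappa\leq 1/e<1/2$, so the gap halves every $l=\lceil\kappa\rceil$ iterations, and the valid range $t\leq l\log_2(J(K_0)/\varepsilon)$ is exactly where this halving has not yet driven the bound below $\varepsilon$.

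\textbf{Main obstacle.} The technical depth is shallow, but the bookkeeping is delicate. The two pressure points are verifying that $C_4$ genuinely implies the \cref{lem:lqgp_a32} hypothesis under the invariant $J(K_t)\leq 2J(K_0)$ (which requires combining several of the bounds from \cref{lem:lqgp_a13}), and checking that $C_5$ and $C_6$ bound the three summands of $\epsilon_t'$ by $\varepsilon/\kappa$ uniformly in $t$ with only $J(K^*)\leq J(K_0)$ available. One must also order the arguments carefully so that the induction for $J(K_t)\leq 2J(K_0)$ is not circular with the stability argument that depends on it, which is why the absorbing-ball observation has to be deployed before invoking \cref{prop:pqgp_a13} at step $t+1$.
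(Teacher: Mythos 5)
Your proposal matches the paper's proof in all essentials: the same induction on $t$ with the invariant $J(K_t)\leq 2J(K_0)$, the same use of $\|K_{t+1}-K_{t+1}^\star\|=2\eta\|\delta_t\|\leq\sqrt{C_4}/C_1$ to trigger \cref{lem:lqgp_a32} for stability, the same calibration of $C_5,C_6$ to control the three summands of $\epsilon_t'$ in \cref{lem:lqgp_a24}, and the same case split on whether $J(K_t)-J(K^*)>\varepsilon$. The only cosmetic divergence is in item~3, where the paper plugs the geometrically decaying bound $\epsilon_t'\leq 2^{-(\lfloor t/l\rfloor-2)}(Z/2)$ into a generic recursion lemma from \cite{lan2020first}, whereas you absorb $\epsilon_t'\leq\varepsilon/\kappa<[J(K_t)-J(K^*)]/\kappa$ into the multiplicative factor to obtain a clean $(1-1/\kappa)$ per-step contraction while the gap exceeds $\varepsilon$ --- both routes yield the stated halving every $l=\lceil\kappa\rceil$ iterations.
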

\begin{proof}
    We use mathematical induction, where the base case of $t=0$ holds by our assumptions. Now consider some $t +1 \leq T$.
    We first show $K_{t+1} \in \mathcal S$. 
    Recall $K_{t+1}$ from Line~\ref{line:pe_eqn} and the error-free update, $K_{t+1}^\star = K_t - 2\eta E_{K_t}^\star$ where $\eta = 1/(2C_1)$. 
    Then
    \begin{align*}
        \|K_{t+1}-K_{t+1}^\star\|
        =
        \frac{\|\delta_t\|_F}{C_1}
        &\leq
        \frac{\|B\| \sigma_{\min}(Q)}{[4\sigma_{\min}^{-1}(\Psi)\|B\|^2J(K_0)] \cdot 2(C_4+1)} \\
        &\leq
        \frac{\sigma_{\min}(Q)}{4J(K_t) }\cdot \frac{\sigma_{\min}(\Psi)}{\|B\| (C_4+1)} \\
        &\leq
        \frac{\sigma_{\min}(Q)}{4J(K_t)} \cdot \frac{\sigma_{\min}(\Sigma_{K_t})}{\|B\|(\|A-BK_t\|+1)},
    \end{align*}
    \sloppy where the first inequality is from recalling $C_1$ from~\eqref{eq:b2_LQR} and using $\|\delta_t\|^2_F \leq C_5$, the second inequality is from $J(K_t) \leq \max\{J(K_0), J(K^*) + \varepsilon\} \leq 2J(K_0)$ (since $\varepsilon \leq J(K_0)$), and the third inequality can be shown similarly to~\eqref{eq:qgp_a58}. Therefore, the hypothesis for~\cref{lem:lqgp_a32} is satisfied, and so $K_{t+1} \in \mathcal S$.
    
    Next, we prove monotonicity or convergence. Suppose $J(K_{t}) - J(K^*) > \varepsilon$. By the inductive hypothesis, $K_t \in \mathcal S$ and we already showed $J(K_t) \leq 2J(K_0)$ and $K_{t+1} \in \mathcal S$. 
    Invoking~\cref{lem:lqgp_a24} and adding $J(K_{*})-J(K_t)$,
    \begin{align} \label{eq:b3_LQR}
        J(K_{t+1}) - J(K_t)
        &\leq 
        -\frac{C_2}{4J(K^*)C_1} [J(K_t) - J(K^*)] + \epsilon_t'
    \end{align}
    where $\epsilon_t'$ is defined in~\eqref{eq:a20_LQR}. By our bounds $\|\delta_t\|_F^2 \leq \min\{C_6, C_7\}$, we ensure 
        $\epsilon_t' 
        \leq 
        \frac{C_2 \varepsilon}{40J(K_0)C_1}
        \leq
        \frac{C_2 \varepsilon}{40J(K^*)C_1}$.
    Plugging back into~\eqref{eq:b3_LQR} and noticing $J(K_t) - J(K^*) > \varepsilon$, we get $J(K_{t+1}) \leq J(K_t)$. 
    
    \sloppy Finally, let us show linear rate of convergence. To simplify our notation, let us define the constant $Z = \frac{C_2}{5C_1}$. We already showed a bound on $\epsilon_t'$ in the previous paragraph, which is equivalent to $\epsilon_t' \leq \frac{Z \varepsilon}{8J(K_0)}$. Recalling that we consider iterations $t \leq l \cdot \log_2(J(K_0)/\varepsilon)$, this implies $\epsilon_{t}' \leq 2^{-(\lfloor t/l \rfloor -2)} (Z/2)$. 
    Thus, using our bound on $\epsilon_t'$ and choice of $l$, the recursion~\eqref{eq:b3_LQR} can be simplified into (c.f.,~\cite[Lemma 11]{lan2022policy})
    \begin{align*}
        J(K_{t+1}) - J(K^*) 
        &\leq 
        2^{-\lfloor (t+1)/l \rfloor} \Big( J(K_0) - J(K^*) + \frac{5l \cdot Z}{16} \Big) 
        \leq
        2^{-\lfloor (t+1)/l \rfloor} J(K_0),
    \end{align*}
    where the last inequality is because $\kappa \geq 1$, so $l/2 = \lceil \kappa \rceil /2 \leq \kappa$. This completes the proof of convergence and finishes our proof by induction. 
\end{proof}
A few remarks are in order. The assumption $\varepsilon \leq J(K_0)$ is mild, as otherwise the function gap is at most $\varepsilon$ with the initial policy.
From the theorem, we need $O(J(K_0)^2 \ln(J(K_0)/\varepsilon))$ 
iterations  to ensure $J(K_T) - J(K^*) \leq \varepsilon$, where big-O hides dependence on all constants except on $J(K_0)$ and $\varepsilon$. 
And an important consequence of the theorem is a uniform bound on the spectral radius of $A-BK_t$.  
\begin{corollary} \label{cor:qgp_a1}
    If~\cref{thm:b1_LQR}'s assumptions hold, then there exists a uniform bound on the spectral radius,
        $\max_{0 \leq t \leq T} \rho(A-BK_t) \leq \sqrt{1 - \frac{\sigma_{\min}(\Psi)\sigma_{\min}(Q)}{2J(K_0)}} < 1$.
\end{corollary}
The corollary can be proven with~\cref{lem:lqgp_a13} and $J(K_t) \leq 2J(K_0)$ from~\cref{thm:b1_LQR}.
Next, we will define the critic method to make the gradient error sufficiently small.

\section{The critic: Policy evaluation via stochastic primal-dual} \label{sec:a1_LQR}
Let $K \in \mathcal S$. 
Recall from~\cref{eq:linsys_bellman} the natural gradient $E_{K}$ can be derived by solving the linear system of equations induced by Bellman's fixed-point equation,
\begin{equation} \label{eq:qgp_a54}
    \mathbb{E}_{\Pi_K}[\tilde{H}]\vartheta = \mathbb{E}_{\Pi_K}[\tilde{b}],
\end{equation}
where $\tilde{H}$ and $\tilde{b}$ are the stochastic estimates of $H = \mathbb{E}_{\Pi_K}[\tilde{H}]$ and $b = \mathbb{E}_{\Pi_K}[\tilde{b}]$ as defined in~\cref{linear_system_parameters}. We re-formulate the above problem as a residual minimization problem $\min_{\vartheta \in X} \|\mathbb{E}_{\Pi_K}[\tilde{H}\vartheta-\tilde{b}]\|$, which is equivalent to the min-max problem,
\begin{align} \label{eq:a10_OPN}
    \min_{\vartheta \in X} \big \{ f(\vartheta) := \max_{y \in Y} \{\langle y, \mathbb{E}_{\Pi_K}[\tilde{H}]\vartheta - \mathbb{E}_{\Pi_K}[\tilde{b}]\rangle \} \big\} ,
\end{align}
where $X = \mathbb{R}^{{n \cdot m \choose 2}+1}$ and $Y = \{y \in \mathbb{R}^N: \|y\| \leq 1\}$ are the primal and dual spaces, respectively. 
The main advantage of the min-max formulation is one can efficiently get nearly unbiased estimates of the gradient, while one cannot as easily when using the $\ell_2$ norm from the residual minimization problem.

\edits{For the remainder of this section, we will establish some basic structural properties for $\tilde{H}$ and $\tilde{b}$, followed by an efficient primal-dual algorithm which leverages these properties to solve~\eqref{eq:a10_OPN}.
We finish by combining the primal-dual algorithm for policy evaluation with the previous natural policy gradient method to solve LQR.
}

%

\subsection{Structural properties of Bellman's fixed-point equation}

Recall $x_t \in \mathbb{R}^n$ is the state at time $t$ of the linear dynamical system~\eqref{dynamics} while $u_t = -Kx_t + v_t \in \mathbb{R}^m$ is the feedback, where $v_t$ is the policy disturbance introduced in~\eqref{eq:randomized_linear_state_feedback}. We let the stochastic process be $\{\xi_t  \in \mathbb{R}^{2(n+m)}: \xi_t = [x_t,u_t,x_{t+1},u_{t+1}]\}_t$, i.e., the current and next state-action pairs, and it is generated by some probability measure $(\Omega, \mathcal{F}, \mathbb{P})$ on the state space $\mathcal{S} \equiv \mathbb{R}^{2(n+m)}$. 
Define $\mathcal{F}_t = \sigma(\xi_1,\xi_2,\ldots,\xi_t)$ as the $\sigma$-algebra generated by the first $t$ random variables. 
Since the stochastic matrix/vector $(\tilde{H}, \tilde{b})$ from~~\cref{linear_system_parameters} are functions of two consecutive state-action pairs $(x_t,u_t)$ and $(x_{t+1},u_{t+1})$, 
then we write the stochastic estimates as
    $\tilde{H}_t = \tilde{H}(\xi_t) \text{ and } \tilde{b}_t = \tilde{b}(\xi_t)$
to indicate their parameterization by the random variable at time $t$. 

Recall $\Psi$ is the covariance matrix of the noise $w_t$ 
and define the covariance matrices
\begin{align} \label{eq:a15_LQR}
    \TSig_K^{(t)} =
    \begin{bmatrix}
        \Sigma_K^{(t)} & -\Sigma_K^{(t)} K^T \\
        -K \Sigma_K^{(t)} & K\Sigma_K^{(t)} K^T  + \sigma^2 I_m
    \end{bmatrix},
    \Sigma_K^{(t)} = \sum\limits_{p=0}^{t-1} (A-BK)^p \Psi [(A-BK)^T]^p,
\end{align}
where $I_m$ is the identity matrix of size $m$.
By examining the policy~\eqref{eq:randomized_linear_state_feedback} and dynamics~\eqref{eq:a17_LQR}, we can deduce the $t$-th state-action pair $[x_t,u_t]$ is distributed according to a multivariate Gaussian with mean $[x_0, -Kx_0]$ and covariance matrix $\TSig_K^{(t)}$.

Because $[x_t,u_t]$ has unbounded support, then so do $\tilde{H}(\xi_t)$ and $\tilde{b}(\xi_t)$. 
Thus, the stochastic estimates $(\tilde{H}(\xi_t), \tilde{b}(\xi_t))$ can be arbitrarily large, in which case the stochastic estimation error is also arbitrarily large.
To avoid such cases, we leverage the fact $[x_t,u_t]$ is a multivariate Gaussian, which exhibits the following ``light-tail'' property. 
This can be proven by the Hanson-Wright inequality (see~\cite{rudelson2013hanson}).
\begin{lemma} \label{prop:pqgp_a9}
    Let $\ell \sim \mathcal{N}(\mu, \Sigma)$. For any $\delta \in (0,1)$, we have
    \begin{align*}
        \mathbb{P} \Big \{ \frac{1}{2}\|\ell\|^2 > \mathrm{Tr}(\Sigma) + \frac{\sqrt{c_2^4\|\Sigma\|_F^2 \log({2}/{\delta})}}{\sqrt{c_1}}  + \frac{c_2^2\|\Sigma\|}{c_1}\log(\frac{2}{\delta}) + \|\mu\|^2 \Big \} 
        \leq
        \delta,
    \end{align*}
    where $c_1$ and $c_2$ are some absolute positive constants.
\end{lemma}
\hide{}{
\begin{proof}
    Let $z$ be a vector consisting of independent standard Gaussian random variables. 
    Then $z \sim \mathcal{N}(0,I)$. 
    By the Hanson-Wright inequality~\cite[Theorem 1]{rudelson2013hanson}, we have for any $\alpha \geq 0$,
    \begin{align*}
        \mathrm{Pr}\big\{ \vert z^T\Sigma z - \underbrace{\mathbb{E} \mathrm{Tr}(z^T\Sigma z)}_{\mathrm{Tr}(\Sigma)} \vert > \alpha \big\}
        \leq
        2\mathrm{exp}\big\{ -c_1 \cdot \min \big( \frac{\alpha^2}{c_2^4 \|\Sigma\|_F^2}, \frac{\alpha}{c_2^2\|\Sigma\|} \big) \big\},
    \end{align*}
    where $c_1$ and $c_2$ are some absolute constants (see~\cite[Theorem 1]{rudelson2013hanson} for more details).
    Setting $\alpha = \sqrt{\frac{c_2^4\|\Sigma\|_F^2}{c_1} \log(\frac{2}{\delta})} + \frac{c_2^2\|\Sigma\|}{c_1}\log(\frac{2}{\delta})$,
    \begin{align*}
        \mathrm{Pr}\Big\{ z^T\Sigma z > \mathrm{Tr}(\Sigma) + \sqrt{\frac{c_2^4\|\Sigma\|_F^2}{c_1} \log(\frac{2}{\delta})} + \frac{c_2^2\|\Sigma\|}{c_1}\log(\frac{2}{\delta}) \Big\} 
        \leq
        \delta.
    \end{align*}
    Now, define the random multivariate Gaussian, $\ell_0 := \Sigma^{1/2}z$.
    Observing $z^T\Sigma z = \ell_0^T\ell_0 = \|\ell_0\|^2$ and using the inequality $\frac{1}{2}\|\ell_0+\mu\|^2 \leq \|\ell_0\|^2 + \|\mu\|^2$, we conclude
    \begin{align*}
        &\mathrm{Pr}\Big\{ \frac{1}{2}\|\ell_0+\mu\|^2 > \mathrm{Tr}(\Sigma) + \sqrt{\frac{c_2^4\|\Sigma\|_F^2}{c_1} \log(\frac{2}{\delta})} + \frac{c_2^2\|\Sigma\|}{c_1}\log(\frac{2}{\delta}) + \|\mu\|^2 \Big\}  \\
        &\leq
        \mathrm{Pr}\Big\{ \|\ell_0\|^2 + \|\mu\|^2 > \mathrm{Tr}(\Sigma) + \sqrt{\frac{c_2^4\|\Sigma\|_F^2}{c_1} \log(\frac{2}{\delta})} + \frac{c_2^2\|\Sigma\|}{c_1}\log(\frac{2}{\delta}) + \|\mu\|^2 \Big\} 
        \leq
        \delta.
    \end{align*}
    The proof is finished upon noticing $\ell := \ell_0 + \mu$ is distributed wrt $\ell \sim \mathcal{N}(\mu, \Sigma)$.
\end{proof}
}

\hide{}{Note that when the current policy is $K$, assumed to be stable, then the dynamics of the system evolve according to (c.f.,~\eqref{eq:a17_LQR})
\begin{align*}
    X_{t+1} = (A - BK)X_t + w_t.
\end{align*}
Consequently, if we condition on some initial state $X_0$ (which can be the initial state of the system or the state we left off at during the last epoch, rollout, etc.), then $X_t \sim \mathcal{N}\big((A-BK)^tX_0, \Sigma_K^{(t)}\big)$, where the covariance is
\begin{align*}
    \Sigma_K^{(t)} := \sum\limits_{p=0}^{t-1} (A-BK)^p \Psi [(A-BK)^T]^p.
\end{align*}
Indeed, the asymptotic covariance, i.e., as $t \to \infty$ is equivalent to the solution of the Lyapunov equation of~\eqref{eq:lyap_cov_mat}: 
\begin{align} \label{eq:a16_LQR}
    \lim_{t \to \infty} \Sigma_K^{(t)} = \Sigma_K
\end{align}
In view of these definitions, we can now define the set of desirable events. 

Since $\|X\|_F^2 = \trace(X^TX) \leq \|X\|\|X\|_F$ for any square matrix $X$, then
\begin{align*}
    4\|\Sigma_K^{(t)}\| \sqrt{\ln\big(\frac{1}{\delta}}\big) + \frac{\|\Sigma_K^{(t)}\|_F^2}{4\|\Sigma_K^{(t)}\|} + \trace(\Sigma_K^{(t)}) + 2\|X_0\|^2
    \leq 
    \big(4\|\Sigma_K^{(t)}\| + \trace(\Sigma_K^{(t)}) + 2\|X_0\|\big) \ln \sqrt{\frac{e}{\delta}}.
\end{align*}
}

Consider the event $[x_t,u_t]$ is bounded for some $\delta \in (0,1/e]$:
\begin{align} \label{eq:a1_LQR}
    \mathcal{E}_t(\delta) = 
    \Big \{ \|[x_t, u_t]\|^2
    \leq  
    4\big(\frac{c_2^2}{\sqrt{c_1}} \|\TSig^{(t)}_K\| + \frac{c_2^2}{c_1}\trace(\TSig^{(t)}_K) + \|[x_0,  -Kx_0 ]\|^2 \big) \ln \frac{1}{\delta} \Big\}.
\end{align}
By~\cref{prop:pqgp_a9}, we know $\mathrm{Pr}(\mathcal{E}_t(\delta)) \geq 1-\delta$. 
\edits{Let $\tau \geq 1$ be some parameter we call the \textit{mixing time}, which will be further elaborated in~\cref{lem:lqgp_a22} and in \cref{sec:general_pd}.
For the meantime, one can view this as a ``buffer time'' between state-action pairs to reduce their correlation}.
Let us define the intersection of the following events:
\begin{align} \label{eq:qgp_a57}
    \textstyle \mathcal{E}(\delta) = \bigcap_{1 \leq t \leq \bar N} (\mathcal{E}_{t \cdot (\tau+1)-1}(\delta) \cap \mathcal{E}_{t \cdot (\tau+1)}(\delta)).
\end{align}
where $\bar N \geq 1$. While the dependence on $\bar N$ is omitted, it will be clear from context that $\bar N$ is the total number of observed samples $\xi_t$. 
While the analysis of policy evaluation relies on $\mathcal{E}(\delta)$ taking place, which occurs with high probability (i.e., by union bound, $\mathbb{P}(\mathcal{E}(\delta)) \geq 1-2\bar N\delta$), 
there is still a nonzero probability $\mathcal{E}(\delta)$ will not occur.
In such cases, we have the following solution, which is well-suited for the online (i.e., single trajectory) setting, where one cannot reset the system. 
Let $\hat{t}$ be the first time $\mathcal{E}_{\hat{t}(\tau+1)}(\delta)$ does not occur.
Then we pretend to reset the system with a new initial state $x_0 = x_{\hat{t}(\tau+1)}$, i.e., use the same trajectory but re-index time so that $\hat{t}(\tau+1)$ is the new starting time and adjust $\mathcal{E}_t(\delta)$ to account for this new initial state.
Although the new initial state $x_{\hat{t}(\tau+1)}$ may be large,~\cref{prop:pqgp_a9} says $x_{\hat{t}(\tau+1)}$ will almost surely be bounded (its norm will be influenced by the covariance matrix from~\eqref{eq:a15_LQR}).

Equipped with these properties of the state-action pair,
we will first show the stochastic estimates $\tilde{H}_t$ and $\tilde{b}_t$ also exhibit a ``light-tail''  property.
Recall the feedback lies in $u_t \in \mathbb{R}^m$ and that $\sigma > 0$ is the added policy disturbance level in~\eqref{eq:randomized_linear_state_feedback}.
\begin{lemma} \label{lem:a1_LQR}
    Suppose $K \in \mathcal S$ is the current policy and the events $\mathcal{E}_t(\delta)$ and $\mathcal{E}_{t+1}(\delta)$ occur for some $\delta \in (0,1/e]$.
    Then there exists constants
    \begin{align*}
        M_H
        &:=
        2 + 64(1+\|K\|)^4
        [O(1) {J(K)}/{\sigma_{\min}(Q)} + \|x_0\|^2 + \sigma^2 \cdot (m + 1) ]^2, \\
        M_b &:=
        M_H \max\{\|Q\|,\|R\|\},
    \end{align*}
    such that $\|\tilde{H}_t\| \leq M_H \big(\ln \frac{1}{\delta}\big)^{2}$ and $\|\tilde{b}_t\| \leq M_b \big(\ln \frac{1}{\delta}\big)^{2}$,
    where $O(1)$ is some absolute constant.

\end{lemma}
A proof can be found in~\cref{sec:b7_LQR}. 

We next bound the expected value $H = \mathbb{E}[\tilde H]$,
%
where can be shown similarly to~\cite[Lemma B.2]{yang2019provably}.
\begin{lemma} \label{lem:a6_OPN}
    Suppose $K \in \mathcal S$. Then matrix $H=\mathbb{E}_{\Pi_K}[\tilde{H}]$ has norm
    $    \|H\| 
        \leq L_H$
        where we define $L_H := 
        1 + 4(1 + \|K\|^2_F)\frac{J(K)}{\sigma_{\min}(Q)} + \sigma^2 \cdot (m+2)$.
\end{lemma}

Next, we show the mean noise can be controlled in the sense that the bias of the stochastic estimate $\tilde{H}_t$ and $\tilde{b}_t$ can be made arbitrarily small.
Denote $\TSig_K := \lim_{t \to \infty} \TSig_K^{(t)}$, where the latter matrix is from~\eqref{eq:a15_LQR} (the limit exists whenever $K \in \mathcal S$, c.f.~\cite[Lemma 18]{fazel2018global}).
\begin{lemma} \label{lem:lqgp_a22}
    Use the same assumptions and constants as~\cref{lem:a1_LQR}.
    Letting $\rho \equiv \rho(A-BK) < 1$ and defining
    \begin{align*}
        C
        &:=
        [ M_H^{1/4} + \sqrt{(n+m)/(1-\rho(A-BK))} ]/2 \\
        O_H 
        &:= 
        65 (n+m) \max\{\|\TSig_K\|^2, \|\TSig_K\|^4\} \\
        O_b
        &:= 
        41 \max\{\|Q\|,\|R\|\}(n+m)^2 \max\{\|\TSig_K\|^2, \|\TSig_K\|^4\},
    \end{align*}
    then we have for any mixing time $\tau \geq 1$,
    \begin{align*}
        &\|H - \mathbb{E}[\tilde H_{t+\tau} \vert \mathcal{F}_{t-1}]\| \leq CM_H \big(\ln \frac{1}{\delta} \big)^{3/2} \rho^\tau + O_H \sqrt{\delta} \\
        &\|b - \mathbb{E}[b_{t+\tau} \vert \mathcal{F}_{t-1}]\| \leq CM_b  \big(\ln \frac{1}{\delta}\big)^{3/2} \rho^\tau + O_b \sqrt{\delta}.
    \end{align*}
\end{lemma}
The proof is in~\cref{sec:b7_LQR}. 
This lemma illustrates that choosing a sufficiently large mixing time $\tau$ can help reduce the bias, where the bias is affected by the correlation between samples $\xi_t$. 
See~\cref{sec:general_pd} for more details on the mixing time.

To measure progress towards minimizing $f$ from~\cref{eq:a10_OPN}, we define the 
$Q$-gap function (where with some abuse of notation, $z \equiv (\vartheta, y)$ and $\tilde{z} \equiv (\tilde{\vartheta},\tilde{y})$)
is defined as $Q(\tilde{z}, z) :=  \langle H\tilde \vartheta - b,y \rangle  - \langle H\vartheta - b, \tilde y \rangle$,
and the primal-dual gap function is 
\begin{align} \label{eq:primal_dual_gap_func}
    \textstyle g(\tilde{z}) := \max_{z  \in X \times Y} Q(\tilde{z},z).
\end{align}
If there is a $\vartheta^*$ such that $H\vartheta^*=b$, then
$g(\tilde z) \geq f(\tilde{\vartheta})$ for all $\tilde y \in Y$. In this case, minimizing~\eqref{eq:primal_dual_gap_func} also  minimizes $f$ from~\cref{eq:a10_OPN}.
In view of this definition, we show the primal-dual gap function satisfies a regularity condition related to sharpness, as first introduced by Polyak~\cite{polyak1979sharp}.
This condition also appears in some data science problems~\cite{davis2023stochastic}. We will use it to help derive faster convergence rates.
Recall $\sigma \geq 0$ is the added policy disturbance  from~\eqref{eq:randomized_linear_state_feedback}, which can be arbitrarily set.

\begin{lemma}
\label{lem:lqgp_a29}
    Let $\rho \equiv \rho(A-BK)$. If $K \in \mathcal S$ and $\sigma^2 \geq \sigma_{\min}(\Psi)[1 + \|K\|^2]$, then
    \begin{align*}
        \mu := 
        \big( \sqrt{n+m}(1 + [\sigma_{\min}(\Psi)]^{-2})/(1-\rho^2) \big)^{-1}
        > 0
    \end{align*}
    satisfies $\mu \|\vartheta-\vartheta^*\| \leq g(z)$ for all $z \equiv [\vartheta,y] \in X \times Y$. 
\end{lemma}
The proof is given in~\cref{sec:b7_LQR}. 

We will now utilize these properties to derive a primal-dual method to solve the policy evaluation problem.

\subsection{A primal-dual algorithm for policy evaluation under Markovian noise} \label{sec:general_pd}
We use a primal-dual method,~\cref{alg:a1_OPN}, to find a near optimal primal-dual solution $z \equiv [\vartheta, y] \in X \times Y$ for the min-max problem~\eqref{eq:a10_OPN}. In each iteration, the algorithm separately collects $\tau$ samples to estimate gradients for the primal and dual steps. 
We refer to $\tau \geq 1$, which can be arbitrarily set, as the \textit{mixing time}, since its effect on convergence is related to the mixing time of ergodic Markov chains.
We call this method the conditional stochastic primal-dual method since $\tau$ must satisfy a condition (\cref{prop:a2_OPN}) for the final solution to attain a desired accuracy~\cite{kotsalis2022simple}. 
With some abuse of notation, we write the $t$-th stochastic estimates for the dual step as $\tilde{H}_{t,Y}$ and $\tilde{b}_{t,Y}$ and for primal step, $\tilde{H}_{t,X}$ and $\tilde{b}_{t,X}$ (see Lines~\ref{line:est_dual} and~\ref{line:est_prim}). 
The reason for collecting $\tau$ samples to estimate one gradient is to reduce the bias (e.g., correlation across time) between the primal and dual steps.
After completing $k$ primal and dual steps, we return a weighted average of all past $k$ points.

\begin{algorithm}
\caption{Conditional stochastic primal-dual (CSPD)}
\label{alg:a1_OPN}
\begin{algorithmic}[1]
\STATE{Input: $X,Y \subseteq \mathbb{R}^N$; $\vartheta_{-1}=\vartheta_0 \in X$, $y_0 \in Y$; $k, \tau \in \mathbb{Z}_{++}$; $\{\eta_t, \lambda_t, \zeta_t\}_{t=1}^k$}
\FOR{$t=1,\ldots,k$}
\STATE{$g_{t} = \vartheta_{t-1} + \zeta_t(\vartheta_{t-1} - \vartheta_{t-2})$}
\STATE{Collect $\tau$ samples, use $\tau$-th sample to set $(\tilde{H}_{t,Y},\tilde{b}_{t,Y}) = (\tilde{H}_{(2t-1)\tau)}, \tilde{b}_{(2t-1)\tau)})$} \label{line:est_dual}
\STATE{$y_{t} = \mathrm{argmin}_{y  \in Y} \ \{ \langle  -\tilde{H}_{t,Y}g_{t} + \tilde{b}_{t,Y}, y \rangle + \frac{\lambda_t}{2}\|y-y_{t-1}\|^2 \}$} 
\STATE{Collect $\tau$ more samples, use $\tau$-th sample to set $\tilde{H}_{t,X} = \tilde{H}_{2t\tau}$} \label{line:est_prim}
\STATE{$\vartheta_{t} = \mathrm{argmin}_{\vartheta \in X} \ \{ \langle y_{t}, \tilde{H}_{t,X}\vartheta \rangle + \frac{\eta_t}{2} \|\vartheta - \vartheta_{t-1}\|^2 \}$} 
\ENDFOR
\STATE{$(\bar{\vartheta}_k,\bar{y}_k) = \frac{2}{k(k+1)} \sum\limits_{t=1}^k t \cdot (\vartheta_t,y_t)$}
\RETURN $\bar{\vartheta}_k$
\end{algorithmic}
\end{algorithm}

We provide the general convergence result.
\begin{proposition} \label{prop:a1_OPN}
    Let $\{\gamma_t,\eta_t,\lambda_t,\zeta_t\}_t$ be a set of non-negative reals satisfying
        $\gamma_{t-1} \eta_{t-1} \leq \gamma_{t}\eta_t$, $\gamma_{t-1} \lambda_{t-1} \leq \gamma_{t}\lambda_t$, and $\gamma_{t}\zeta_t = \gamma_{t-1}$,
    and let there exist some $p,q \in (0,1)$ satisfying $\|H\|^2/(p \lambda_{t}) - q\eta_{t} \leq 0$ for all $t$. Then
    \begin{align*}
        \sum\limits_{t=1}^k \gamma_t Q(z_t,z) 
        \leq 
        \gamma_k\eta_{k}D_X^2 + \gamma_k\lambda_{k}D_Y^2 +  \sum\limits_{t=1}^{k} \Lambda_{t}(\vartheta,y),
    \end{align*}
    where $z_t = [\vartheta_t, y_t]$, $D_X^2 := \max_{x,x' \in X} \|x-x'\|^2$ (and similarly for $D_Y^2$) and
    \begin{align*}
        \Lambda_t(\vartheta,y) 
        &:= -\frac{\gamma_{t}\lambda_t}{2}(1-p) \|y_{t}-y_{t-1}\|^2 -\frac{(1-q)\gamma_{t}\eta_t}{2}\|\vartheta_{t}-\vartheta_{t-1}\|^2 \\
        &\hspace{10pt}
        + \gamma_{t}\langle  (\tilde{H}_{t,Y} - H) g_{t} - (\tilde{b}_{t,Y} - b), y_{t}-y \rangle - \gamma_{t}\langle(\tilde{H}_{t,X} - H)^T y_t, \vartheta_{t}-\vartheta \rangle.
    \end{align*}
\end{proposition}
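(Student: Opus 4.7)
\medskip

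\noindent\textbf{Proof proposal.} The plan is to derive a per-iteration inequality for $\gamma_t Q(z_t, z)$ from the optimality conditions of the two Bregman subproblems in~\cref{alg:a1_OPN}, then sum over $t$ so that the Bregman terms telescope, the extrapolation cross terms telescope, and all noise contributions are collected into $\Lambda_t$.

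First I would apply the standard three-point inequality for Bregman proximal minimization to both updates. For the dual step (line~\ref{line:est_dual}), optimality yields
\[
\langle -\tilde H_{t,Y}z_t + \tilde b_{t,Y},\, y_t - y\rangle + \hat g(y_t) - \hat g(y) \le \lambda_t\bigl[V(y_{t-1},y) - V(y_{t-1},y_t) - V(y_t,y)\bigr].
\]
Rewriting the linear part in terms of the deterministic $H$, $b$ produces the dual noise term of $\Lambda_t$. For the primal step, optimality gives
\[
\langle \tilde H_{t,X}^{T} y_t,\, x_t - x\rangle \le \eta_t\bigl[V(x_{t-1},x) - V(x_{t-1},x_t) - V(x_t,x)\bigr],
\]
which in the same way yields the primal noise term of $\Lambda_t$. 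Next, I would use the decomposition
\[
Q(z_t,z) = \bigl[\langle H x_t - b,\, y-y_t\rangle + \hat g(y_t) - \hat g(y)\bigr] + \langle H(x_t - x),\, y_t\rangle
\]
together with the identity $Hx_t - b = (Hz_t - b) + H(x_t - z_t)$ to assemble
\[
\gamma_t Q(z_t,z) \le \gamma_t\lambda_t\bigl[V(y_{t-1},y)-V(y_t,y)\bigr] + \gamma_t\eta_t\bigl[V(x_{t-1},x)-V(x_t,x)\bigr] + \Lambda_t(x,y) + \gamma_t\langle H(x_t - z_t),\, y - y_t\rangle.
\]

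Summing over $t=1,\dots,k$, the first two brackets telescope: using $\gamma_{t-1}\eta_{t-1}\le\gamma_t\eta_t$ and $V(x_s,x)\le D_X^2$, one obtains at most $\gamma_k\eta_k D_X^2 - \gamma_k\eta_k V(x_k,x)\le \gamma_k\eta_k D_X^2$, and likewise $\gamma_k\lambda_k D_Y^2$ for the dual. The extrapolation term $\gamma_t\langle H(x_t-z_t),\,y-y_t\rangle$ expands, via $z_t = x_{t-1}+\theta_t(x_{t-1}-x_{t-2})$ and the weighting identity $\gamma_t\theta_t = \gamma_{t-1}$, into $\gamma_t\langle H(x_t-x_{t-1}),\,y-y_t\rangle - \gamma_{t-1}\langle H(x_{t-1}-x_{t-2}),\,y-y_t\rangle$. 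Splitting $y-y_t = (y-y_{t-1}) + (y_{t-1}-y_t)$ in the second piece allows the telescoping $\sum_t[\gamma_t\langle H(x_t-x_{t-1}),y-y_t\rangle - \gamma_{t-1}\langle H(x_{t-1}-x_{t-2}),y-y_{t-1}\rangle] = \gamma_k\langle H(x_k-x_{k-1}),y-y_k\rangle$ (the $t=1$ term vanishes since $x_{-1}=x_0$), leaving a residual sum $\sum_{t=2}^k \gamma_{t-1}\langle H(x_{t-1}-x_{t-2}),\,y_{t-1}-y_t\rangle$ and one boundary contribution at $t=k$.

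The main obstacle is absorbing these residual cross terms into the negative quadratic reserves contained in the Bregman divergences. Here I would use Young's inequality together with the 1-strong convexity of $\omega$: for the coupling term at iteration $t$,
\[
\gamma_{t-1}\|H\|\,\|x_{t-1}-x_{t-2}\|\,\|y_{t-1}-y_t\| \le \tfrac{\gamma_{t-1}\eta_{t-1}q}{2}\|x_{t-1}-x_{t-2}\|^2 + \tfrac{\gamma_{t-1}\|H\|^2}{2q\eta_{t-1}}\|y_t-y_{t-1}\|^2,
\]
and the stepsize condition $\|H\|^2/(p\lambda_t)\le q\eta_t$ combined with $\gamma_{t-1}\lambda_{t-1}\le\gamma_t\lambda_t$ bounds the second coefficient by $\tfrac{\gamma_t\lambda_t p}{2}$. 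Thus the cross term is dominated by $q$-fractions of the primal Bregman at step $t-1$ and $p$-fractions of the dual Bregman at step $t$, which is exactly what is left after reserving the $(1-p),(1-q)$ pieces for $\Lambda_t$. The boundary term $\gamma_k\langle H(x_k-x_{k-1}),y-y_k\rangle$ is handled analogously, using the unused $q$-fraction of $\gamma_k\eta_k V(x_{k-1},x_k)$ and the final $\gamma_k\lambda_k V(y_k,y)$ term (again via $\|H\|^2/(q\eta_k)\le p\lambda_k\le\lambda_k$). Collecting all inequalities gives exactly the claimed bound.
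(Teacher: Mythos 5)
Your argument is correct and is essentially the standard accelerated primal--dual analysis that the paper itself defers to (it cites \cite{lan2020first} rather than giving a proof): three-point inequalities for both Bregman subproblems, the decomposition of $Q(z_t,z)$ through $Hz_t-b$, telescoping of the weighted Bregman terms and of the extrapolation cross terms via $\gamma_t\theta_t=\gamma_{t-1}$, and absorption of the residual coupling terms into the $p$- and $q$-fractions of the prox terms using $\|H\|^2\le pq\,\lambda_t\eta_t$ together with the monotonicity conditions. All the key steps check out, including the sign bookkeeping that produces the noise terms in $\Lambda_t$ and the handling of the boundary term at $t=k$.
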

A proof can be found within~\cite[Theorem 3.8]{lan2020first}. Next, we bound the stochastic error on a convex set $U$ (where $U$ can either be $X$ or $Y$). Due to space constraints, we only show the main steps of the proof and provide references to auxiliary parts.
\begin{proposition}  \label{prop:a3_OPN}
    Let $\{\gamma_t, c_t \}$ be non-negative scalars satisfying $\gamma_{t-1} c_{t-1} \leq \gamma_{t}c_t$. Let $\tilde{G}_{t\tau}$ be an $\mathcal{F}_{t\tau}$-measurable stochastic vector such that
    \begin{align}
        &\|\tilde{G}_{t\tau} - \mathbb{E}[\tilde{G}_{t\tau} \vert \mathcal{F}_{(t-1)\tau} ]\| \leq \varsigma \label{eq:a5_OPN} \text{ w.p. $1-\delta$}, \\
        &\|G - \mathbb{E}[\tilde{G}_{t\tau} \vert \mathcal{F}_{(t-1)\tau}]\|_* \leq CM\rho^\tau  + O\sqrt{\delta} \label{eq:a6_OPN},
    \end{align}
    where $C,M,\varsigma,\tau \in \mathbb{R}_+$, $\rho,\delta \in [0,1)$, and $G \in \mathbb{R}^n$ are constants. Then for any vectors $\{u_t,v_t \in U \}_{t}$ where $u_t$ and $v_t$ are $\mathcal{F}_{t \tau}$-measurable, with probability at least $1-(k+1)\delta$,
    \begin{align*}
        &\sum\limits_{t=1}^{k} \gamma_{t} \langle G - \tilde{G}_{t \tau}, u_{t}-u \rangle  - \frac{c_t \gamma_t}{2}\|u_{t}-u_{t-1}\|^2 \leq 
        \gamma_k c_{k}D_U^2\\
        &
        + \sum\limits_{t=1}^{k} \gamma_{t}\bigg(  (C M \rho^\tau + O\sqrt{\delta})D_U+ \frac{4((CM\rho^{\tau})^2 + O^2\delta + \varsigma^2)}{c_t} \bigg) 
        + \varsigma D_U\sqrt{ 8 \ln({1}/{\delta})\sum\limits_{t=1}^{k} \gamma_{t}^2},
    \end{align*}
    where $D_U^2 = \max_{\upsilon,\upsilon' \in U} \|\upsilon-\upsilon'\|^2$.
\end{proposition}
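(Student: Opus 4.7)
The plan is to handle the stochastic error on the left-hand side by decomposing $G - \tilde{G}_{t\tau}$ into a predictable conditional bias and a conditional-mean-zero noise, and then splitting $u_t - u$ using the $\mathcal{F}_{(t-1)\tau}$-measurable anchor $u_{t-1}$. Concretely, set
\begin{equation*}
b_t := G - \mathbb{E}[\tilde{G}_{t\tau}\,|\,\mathcal{F}_{(t-1)\tau}], \qquad \xi_t := \mathbb{E}[\tilde{G}_{t\tau}\,|\,\mathcal{F}_{(t-1)\tau}] - \tilde{G}_{t\tau},
\end{equation*}
so that \eqref{eq:a6_OPN} gives the deterministic bound $\|b_t\|_* \le CM\rho^\tau + O\sqrt{\delta}$, while $\{\xi_t\}$ is a martingale difference sequence adapted to $\{\mathcal{F}_{t\tau}\}$ with $\|\xi_t\|_* \le \varsigma$ holding with probability $1-\delta$ by \eqref{eq:a5_OPN}. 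Writing $u_t - u = (u_t - u_{t-1}) + (u_{t-1} - u)$ then produces the three-term identity
\begin{equation*}
\gamma_t\langle G-\tilde{G}_{t\tau}, u_t-u\rangle = \gamma_t\langle G-\tilde{G}_{t\tau}, u_t-u_{t-1}\rangle + \gamma_t\langle b_t, u_{t-1}-u\rangle + \gamma_t\langle \xi_t, u_{t-1}-u\rangle,
\end{equation*}
whose third term is a martingale difference because $u_{t-1}$ and $u$ are $\mathcal{F}_{(t-1)\tau}$-measurable.

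Each piece is then bounded differently. For the ``unpredictable'' piece I apply Young's inequality $\gamma_t\langle G-\tilde{G}_{t\tau}, u_t-u_{t-1}\rangle \le \frac{\gamma_t}{2c_t}\|G-\tilde{G}_{t\tau}\|_*^2 + \frac{c_t\gamma_t}{2}\|u_t-u_{t-1}\|^2$, so the quadratic term exactly cancels the $-\frac{c_t\gamma_t}{2}\|u_t-u_{t-1}\|^2$ on the left. The elementary bound $\|G-\tilde{G}_{t\tau}\|_*^2 \le 2\|b_t\|_*^2 + 2\|\xi_t\|_*^2 \le 4((CM\rho^\tau)^2 + O^2\delta + \varsigma^2)$, valid on the good event, produces (up to constant slack) the $\frac{4((CM\rho^\tau)^2 + O^2\delta + \varsigma^2)}{c_t}$ summand in the conclusion. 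For the bias piece, Cauchy--Schwarz gives $|\langle b_t, u_{t-1}-u\rangle| \le \|b_t\|_* D_U \le (CM\rho^\tau + O\sqrt{\delta}) D_U$, yielding the $(CM\rho^\tau + O\sqrt{\delta})D_U$ summand. For the martingale piece, $\gamma_t\langle \xi_t, u_{t-1}-u\rangle$ is an $\mathcal{F}_{t\tau}$-adapted difference bounded by $\gamma_t \varsigma D_U$ on the good event, so Azuma--Hoeffding gives
\begin{equation*}
\sum_{t=1}^k \gamma_t\langle \xi_t, u_{t-1}-u\rangle \le \varsigma D_U \sqrt{8\sum_{t=1}^k \gamma_t^2 \ln(1/\delta)}
\end{equation*}
with probability at least $1-\delta$. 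The extra boundary term $\gamma_k c_k D_U^2$ in the conclusion I expect to arise from refining the Young's step: replacing $\tfrac{1}{2}\|u_t-u_{t-1}\|^2$ by the Bregman divergence $V(u_{t-1},u_t)$ via modulus-one strong convexity of $\omega$ and then telescoping $V(u_{t-1},u) - V(u_t,u)$ under the monotonicity $\gamma_{t-1} c_{t-1} \le \gamma_t c_t$ leaves exactly one boundary term $\gamma_k c_k V(u_0,u) \le \gamma_k c_k D_U^2$.

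A union bound over the $k$ noise events $\{\|\xi_t\|_* \le \varsigma\}$ (total cost $k\delta$) and the single Azuma--Hoeffding event (cost $\delta$) yields the claimed overall failure probability of $(k+1)\delta$. I expect the main technical obstacle to be the Azuma--Hoeffding step, since the increments $\gamma_t\langle \xi_t, u_{t-1}-u\rangle$ are bounded only with high probability rather than almost surely; one must either truncate $\xi_t$ at level $\varsigma$ and absorb the truncation bias into the $O\sqrt{\delta}$ terms already appearing in the conclusion (this is exactly why the $O\sqrt{\delta}$ slack is tied to the same $\delta$), or invoke a sub-Gaussian Azuma-type concentration for martingale differences with bounded $\psi_2$-Orlicz norm. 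A secondary bookkeeping point is that the stated hypothesis $\gamma_{t-1}\alpha_{t-1}\le \gamma_t\alpha_t$ appears to be shorthand for the $c_t$-monotonicity that the telescoping step needs.
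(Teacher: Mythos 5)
There is a genuine gap, and it sits exactly at the step you flag as routine: treating $\sum_t \gamma_t\langle \xi_t, u_{t-1}-u\rangle$ as a martingale-difference sum. That argument requires $u$ to be $\mathcal{F}_{(t-1)\tau}$-measurable (in effect, fixed in advance), but in this proposition $u$ is a free point of $U$ over which a maximum is subsequently taken — in \cref{thm:a1_OPN} the bound is applied to the maximizer of the gap function $g(\bar z_k)=\max_{z\in Z}Q(\bar z_k,z)$, which depends on the entire trajectory. For such an anticipative $u$, $\langle\xi_t,u_{t-1}-u\rangle$ is not a martingale difference and Azuma--Hoeffding does not apply. This is precisely why the paper's proof introduces the virtual (ghost) sequence $u^v_{t}=\mathrm{argmin}_{u\in U}\{\langle\Delta_t,u\rangle+\alpha_t V(u,u^v_{t-1})\}$ and splits $u_t-u=(u_t-u_{t-1})+(u_{t-1}-u^v_{t-1})+(u^v_{t-1}-u)$: the middle term has both arguments $\mathcal{F}_{(t-1)\tau}$-measurable, so Azuma--Hoeffding applies to it, while the last term is bounded \emph{pathwise and uniformly in $u$} by the mirror-descent regret inequality $\sum_t C_t\le\gamma_kc_kD_U^2+\sum_t\frac{\gamma_t}{c_t}\|\Delta_t\|_*^2$. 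Your decomposition omits the ghost iterate entirely, so it cannot produce a bound valid for an adversarially chosen $u$.

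Relatedly, your explanation of the boundary term $\gamma_kc_kD_U^2$ is not where it actually comes from. The Young's inequality step on $\gamma_t\langle\Delta_t,u_t-u_{t-1}\rangle$ cancels $-\frac{c_t\gamma_t}{2}\|u_t-u_{t-1}\|^2$ exactly and leaves only $\frac{\gamma_t}{2c_t}\|\Delta_t\|_*^2$; there is no telescoping of $V(u_{t-1},u)-V(u_t,u)$ available there, because $u_t$ is the algorithm's given iterate, not a point you get to define by a prox step on $\Delta_t$. The $\gamma_kc_kD_U^2$ term is the regret of the \emph{virtual} sequence (the $C_t$ bound, cf.~Lemma~4.10 of~\cite{lan2020first}). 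Your handling of the $A_t$-type term, the bias term, and the union-bound accounting ($k$ noise events plus one Azuma event giving $(k+1)\delta$) all match the paper, as does your observation that the increments are only bounded with high probability; but without the ghost iterate the central concentration step does not go through.
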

\begin{proof}
    With $u_0^v := u_0$ and $\Delta_t := G - \tilde{G}_{t \tau}$, let $u^v_{t} := \mathrm{argmin}_{u \in U} \{\langle \Delta_t, u \rangle + \frac{c_t}{2} \|u-u^v_{t-1}\|^2 \}$.
    Since $G$ is a constant and $\tilde{G}_{t \tau}$ is assumed to be $\mathcal{F}_{t \tau}$-measurable, a simple proof by induction ensures $u_t^v$ is $\mathcal{F}_{t \tau}$-measurable. Now, we decompose 
    \begin{align*}
        &\sum\limits_{t=1}^{k} \gamma_{t} \langle \Delta_t, u_{t}-u \rangle - \frac{c_t \gamma_t}{2}\|u_{t}-u_{t-1}\|^2  \\
        &= 
        \sum\limits_{t=1}^{k} \underbrace{\gamma_{t} \langle \Delta_t, u_{t}-u_{t-1} \rangle 
        - \frac{c_t \gamma_t}{2}\|u_{t}-u_{t-1}\|^2}_{A_t}  
        + \underbrace{\gamma_{t} \langle \Delta_t, u_{t-1}-u_{t-1}^v \rangle}_{B_t} + \underbrace{\gamma_{t} \langle \Delta_t, u_{t-1}^v - u \rangle}_{C_t}.
    \end{align*}
    We have the following bound, which holds with probability $1-k\delta$ by union bound:
    \begin{align*}
        \sum\limits_{t=1}^k A_t  
        &\leq 
        \sum\limits_{t=1}^k \frac{\gamma_{t}}{2c_t} \|G - \tilde{G}_{t \tau}\|_*^2 \\
        &\leq
        \sum\limits_{t=1}^k \frac{2\gamma_{t}}{c_t} 
        [\|G - \mathbb{E}[\tilde{G}_{t\tau} \vert \mathcal{F}_{(t-1)\tau}]\|_*^2 + \|\mathbb{E}[\tilde{G}_{t\tau} \vert \mathcal{F}_{(t-1)\tau}] -  \tilde{G}_{t \tau}\|_*^2] \\
        &\leq
        \sum\limits_{t=1}^k \frac{2\gamma_{t}}{c_t} 
        ((CM\rho^\tau)^2 + O^2\delta + \varsigma^2 ),
    \end{align*}
    which is due to Young's inequality, $(a+b)^2 \leq 2a^2 + 2b^2$, and then~\eqref{eq:a5_OPN} and~\eqref{eq:a6_OPN}. Next, we can bound with probability $1-\delta$,
    \begin{equation*} 
    \begin{split}
        \sum\limits_{t=1}^k B_t 
        &= 
        \sum\limits_{t=1}^k
        (B_t - \mathbb{E}[B_t \vert \mathcal{F}_{(t-1)\tau}]) + \mathbb{E}[B_t \vert \mathcal{F}_{(t-1)\tau} ] \\
        &\leq
        \varsigma D_U \Big[ 8\ln(1/{\delta}) \sum\limits_{t=1}^{k} \gamma_{t}^2 \Big]^{-1/2} + [CM\rho^\tau + O\sqrt{\delta}]D_U \sum\limits_{t=1}^k \gamma_t,  
    \end{split}
    \end{equation*}
    where we used the Azuma-Hoeffding inequality to bound the martingale-difference sequence $B_t - \mathbb{E}[B_t \vert \mathcal{F}_{(t-1)\tau}]$ (and used the fact $\Delta_t$ is $\mathcal{F}_{t\tau}$-measurable while $u_{t-1}-u_{t-1}^v$ are $\mathcal{F}_{(t-1)\tau}$-measurable as well as~\eqref{eq:a5_OPN} and definition of $D_U$; see~\cite[Lemma 7]{wang2017finite} for full details) and~\eqref{eq:a6_OPN} to bound $\mathbb{E}[B_t \vert \mathcal{F}_{(t-1)\tau}]$. 
    Finally, using $\gamma_{t-1}c_{t-1} \leq \gamma_tc_t$, we get
        $\sum\limits_{t=1}^k C_t 
        \leq
        \gamma_k c_k D_U^2 + \sum\limits_{t=1}^k \frac{\gamma_t}{c_t} \|\Delta_t\|_*^2
        \leq
        \gamma_k c_k D_U^2 + \sum\limits_{t=1}^k \frac{2\gamma_t}{c_t}((CM\rho^\tau)^2 + O^2\delta + \varsigma^2)$, 
    where the first and second inequality can be shown similarly to~\cite[Lemma 4.10]{lan2020first} and the bound on $A_t$, respectively. Combining everything finishes the proof.
\end{proof}

We are now ready to complete the proof of convergence. Recall the gap function $g(\bar{z}) \equiv \max_{z \in X \times Y}Q(\bar{z},z)$ and the weighted average solution $\bar{z}_k \equiv [\bar{x}_k,\bar{y}_k]$. 
\begin{theorem} \label{thm:a1_OPN}
    Suppose $K \in \mathcal S$ and $\mathcal{E}(\delta)$ take place for some $\delta \in (0,1/e]$.
    Under the same assumptions as~\cref{prop:a1_OPN}, then with probability $1-2(k+1)\delta$,
    \begin{equation*} 
    \begin{split}
        \big( &\sum\limits_{t=1}^{k} \gamma_{t} \big) g(\bar{z}_k)  
        \leq
        2\gamma_k(\eta_{k}D_X^2 + \lambda_{k}D_Y^2) + 2(D_XM_X + D_Y M_Y)\sqrt{ 8\ln\frac{1}{\delta}\sum\limits_{t=1}^{k} \gamma_{t}^2 }  \\
        &+\sum\limits_{t=1}^{k} \gamma_{t} \bigg[   C(D_XM_X + D_YM_Y) \rho^\tau + (O_XD_X + O_YD_Y)\sqrt{\delta} + \frac{16M_X^2}{\eta_t(1-q)} + \frac{16M_Y^2}{\lambda_t(1-p)}  \\
        &\hspace{40pt} + 4C^2\Big( \frac{M_X^2}{\eta_t(1-q)} + \frac{M_Y^2}{\lambda_t(1-p)}\Big)\rho^{2\tau}  + 4 \Big( \frac{O_X^2}{\eta_t(1-q)} + \frac{O_Y^2}{\lambda_t(1-p)} \Big) \delta \bigg],
    \end{split}
    \end{equation*}
    where $\overline{\zeta} := \max_t \zeta_t$, $D_U^2 := \max_{u,u' \in U} \|u-u'\|^2$ for some space $U$, and
    \begin{align*}
        \Omega_Y &:= \max\{\|y\| : y \in Y\}, 
        & \Omega_X &:= \|\vartheta^*\| + (1+\overline{\zeta})\sqrt{2}D_X, \\ 
        M_X &:= M_H \Omega_Y \Big(\ln\frac{1}{\delta}\Big)^2, 
        & M_Y &:= \big( M_H\Omega_X + M_b \big) \Big(\ln \frac{1}{\delta}\Big)^2, \\
        O_X &:= O_H\Omega_Y,  
        & O_Y &:= O_H\Omega_X + O_b.
   \end{align*}
   Constants $M_H, M_b, C, O_H, O_b$ are from~\cref{lem:a1_LQR} and~\cref{lem:lqgp_a22}. 
\end{theorem}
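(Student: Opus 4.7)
The plan is to start from the deterministic per-iteration bound already given by \cref{prop:a1_OPN}, which reads
\[
\sum_{t=1}^k \gamma_t Q(z_t,z)\;\le\;\gamma_k\eta_k D_X^2+\gamma_k\lambda_k D_Y^2+\sum_{t=1}^k \Lambda_t(x,y),
\]
and then dispose of the two stochastic inner-product pieces inside $\Lambda_t$ by two separate invocations of \cref{prop:a3_OPN}. Once those random terms are controlled, I would pass from the weighted sum $\sum_t\gamma_t Q(z_t,z)$ to $(\sum_t\gamma_t)\,Q(\bar z_k,z)$ using that $Q(\cdot,z)$ is convex (linear) in its first argument and take the supremum over $z\in Z$ to obtain $g(\bar z_k)$. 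The $-\tfrac{\gamma_t c_t}{2}\|u_t-u_{t-1}\|^2$ slack terms in $\Lambda_t$ are exactly what \cref{prop:a3_OPN} needs in order to cancel the $\|\Delta_t\|_*^2/c_t$ produced by Young's inequality, so the two propositions dovetail.

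For the dual stochastic term in $\Lambda_t$ I would apply \cref{prop:a3_OPN} with $U=Y$, $c_t=\lambda_t(1-p)$, $u_t=y_t$, $\tilde G_{t\tau}=\tilde H_{t,Y}z_t-\tilde b_{t,Y}$, and the conditional target $G=Hz_t-b$. The key observation is that $z_t$ is $\mathcal F_{(2t-2)\tau}$-measurable while the dual batch $\tilde H_{t,Y},\tilde b_{t,Y}$ only reads samples $\{(2t-2)\tau+1,\dots,(2t-1)\tau\}$; consequently the ``virtual sequence'' $u_t^v$ and the martingale-difference argument in \cref{prop:a3_OPN} go through verbatim with $G$ allowed to be $\mathcal F_{(t-1)\tau}$-measurable rather than strictly constant. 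Using strong convexity of $\omega$ one has $\|x_{t-1}-x^*\|\le\sqrt{2}D_X$ and $\|x_{t-1}-x_{t-2}\|\le 2\sqrt{2}D_X$, hence $\|z_t\|\le\|x^*\|+(1+\bar\theta)\sqrt 2 D_X=\Omega_X$. Combining with \cref{asmp:a1_OPN} and \cref{asmp:a2_OPN} then yields the boundedness $\|\tilde G_{t\tau}\|\le M_H\Omega_X(\ln\tfrac{e}{\delta})^\beta+M_b(\ln\tfrac{e}{\delta})^\beta=M_Y$, the concentration bound with $\varsigma_Y=2M_Y$ (triangle inequality on the conditional deviation), and the bias bound $CM_Y\rho^\tau+O_Y\sqrt\delta$. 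Symmetrically, the primal stochastic term is handled by taking $U=X$, $c_t=\eta_t(1-q)$, $u_t=x_t$, $\tilde G_{t\tau}=\tilde H_{t,X}^\top y_t$, $G=H^\top y_t$; here $y_t$ is $\mathcal F_{(2t-1)\tau}$-measurable, the primal batch reads the next $\tau$ samples, and $\|y_t\|\le\Omega_Y$ gives the constants $M_X,\varsigma_X,O_X$ in the statement.

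Adding the two applications of \cref{prop:a3_OPN} contributes an extra $\gamma_k\eta_k(1-q)D_X^2+\gamma_k\lambda_k(1-p)D_Y^2$, which combines with the initial $\gamma_k(\eta_kD_X^2+\lambda_kD_Y^2)$ from \cref{prop:a1_OPN} into the $2\gamma_k(\eta_kD_X^2+\lambda_kD_Y^2)$ in the claim; the Azuma terms collect into $(D_X\varsigma_X+D_Y\varsigma_Y)\sqrt{8\sum_t\gamma_t^2\ln(1/\delta)}$ and the remaining $C,M,O$ contributions accumulate as stated. A union bound over the two applications multiplies the failure probability by $2$, giving $1-2(k+1)\delta$. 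Finally, because every bound on $\Lambda_t$ holds uniformly in $(x,y)\in Z$, maximizing in $z$ on both sides and using $Q(\bar z_k,z)\le\sum_t\gamma_t Q(z_t,z)/\sum_t\gamma_t$ produces the stated inequality for $g(\bar z_k)$.

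\textbf{Main obstacle.} The nontrivial step is the application of \cref{prop:a3_OPN} with a target $G$ that is itself random (namely $Hz_t-b$ and $H^\top y_t$). I expect to need an explicit verification that its proof, which is stated for a deterministic $G$, extends to any $G$ that is $\mathcal F_{(t-1)\tau}$-measurable, since this is exactly what is needed for the virtual iterates $u_t^v$ to remain $\mathcal F_{t\tau}$-measurable and for $\langle\Delta_t,u_{t-1}-u_{t-1}^v\rangle$ to form a martingale difference. Getting the bound $\|z_t\|\le\Omega_X$ right (so that $M_Y$, $\varsigma_Y$, $O_Y$ come out with the correct $\Omega_X$ dependence) is the other bookkeeping item that has to be done carefully.
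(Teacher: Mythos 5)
Your proposal matches the paper's proof essentially step for step: both start from \cref{prop:a1_OPN}, identify the two stochastic inner products in $\Lambda_t$ with $\tilde G_{t,Y}^\tau=\tilde H_{t,Y}z_t-\tilde b_{t,Y}$ (target $Hz_t-b$) and $\tilde G_{t,X}^\tau=\tilde H_{t,X}^\top y_t$ (target $H^\top y_t$), verify the measurability, boundedness ($\|z_t\|\le\Omega_X$, $\|y_t\|\le\Omega_Y$), and bias conditions so that \cref{prop:a3_OPN} applies twice, take a union bound to get $1-2(k+1)\delta$, and finish via convexity of $Q$ in its first argument. The "obstacle" you flag — that \cref{prop:a3_OPN} is stated for a deterministic target $G$ while here $G$ is $\mathcal F_{(t-1)\tau}$-measurable — is a real subtlety that the paper handles only implicitly ("up to an appropriate shift in the index"), so your extra care there is warranted but does not change the route.
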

\begin{proof}
    We first decompose the error $\sum\limits_{t=1}^{k} \Lambda_t(\vartheta,y)$ defined in~\cref{prop:a1_OPN} as
    \begin{align*} 
        \sum\limits_{t=1}^{k} \Lambda_t(\vartheta,y)  
        &= 
        \sum\limits_{t=1}^{k}  \gamma_{t}\langle  (\underbrace{\tilde{H}_{t,Y}g_t - \tilde{b}_{t,Y}}_{\tilde{G}_{t,Y}^\tau}) - (\underbrace{Hg_t - b}_{G_Y}), y_t - y \rangle -\frac{\gamma_{t}\lambda_t(1-p)}{2} \|y_{t}-y_{t-1}\|^2 \\
        &\hspace{25pt}
         - \gamma_{t}\langle {\tilde{H}_{t,X}^T y_t} - {H^Ty_t}, \vartheta_{t}-\vartheta \rangle  -\frac{(1-q)\gamma_{t}\eta_t}{2}\|\vartheta_{t}-\vartheta_{t-1}\|^2.
    \end{align*}
    We wish to use~\cref{prop:a3_OPN} to bound the first two summands (the last two summands can be similarly shown), which requires verification of~\eqref{eq:a5_OPN} and~\eqref{eq:a6_OPN}. First, because $\tilde{H}_{t,Y}$ and $\tilde{b}_{t,Y}$ are generated with the $((2t-1)\tau)$-th sample in~\cref{alg:a1_OPN} and $g_t = \vartheta_{t-1} + \zeta_t (\vartheta_{t-1}-\vartheta_{t-2})$, then $\tilde{G}^{\tau}_{t,Y}$ is $\mathcal{F}_{(2t-1)\tau}$-measurable. Denoting $\mathcal{F}' = \mathcal{F}_{(2t-2)\tau}$, then with probability $1-\delta$,
    \begin{align*}
        &\|\tilde{G}_{t,Y}^\tau - \mathbb{E}[\tilde{G}_{t,Y} \vert \mathcal{F}'] \|_* \\
        &\leq
        [\|\tilde{H}_{t,Y}\|_* + \|\mathbb{E}[\tilde{H}_{t,Y} \vert \mathcal{F}']\|](\|\vartheta^*\| + \|\vartheta_{t-1}-\vartheta^*\| + \zeta_t\|\vartheta_{t-1}-\vartheta_{t-2}\|) \\
        &\hspace{10pt} 
        + \|\tilde{b}_{t,Y}\|_* + \|\mathbb{E}[\tilde{b}_{t,Y} \vert \mathcal{F}']\|_* \\
        &\leq
        [\|\tilde{H}_{t,Y}\|_* + \mathbb{E}[\|\tilde{H}_{t,Y} \| \vert \mathcal{F}']](\|\vartheta^*\| + (1+\zeta_t)D_X) + \|\tilde{b}_{t,Y}\|_* + \mathbb{E}[\|\tilde{b}_{t,Y} \|_* \vert \mathcal{F}']\\
        &\leq
        2M_H\Big(\ln\frac{1}{\delta}\Big)^2 ( \|\vartheta^*\| + (1+\zeta_t)\sqrt{2}D_X ) 
        + 2M_b\Big(\ln \frac{1}{\delta}\Big)^2
        \leq
        2M_Y,
    \end{align*}
    where the second inequality is by definition of $D_X^2$ and Jensen's inequality, and the last line used~\cref{lem:a1_LQR} (recall we assumed $K \in \mathcal S$ and $\mathcal{E}(\delta)$ occur). 
    This verifies~\eqref{eq:a5_OPN} with $\varsigma = 2M_Y$ up to a shift in the index $t\tau$. One can similarly show
    $
        \|{G}_{Y} - \mathbb{E}[\tilde{G}_{t,Y}^\tau \vert \mathcal{F}_{(2t-2)(\tau+1)}] \|_* 
        \leq
        CM_Y \rho^\tau + O_Y \sqrt{\delta}
    $ using~\cref{lem:lqgp_a22}, verifying~\eqref{eq:a6_OPN}.
    
    Fixing $U = Y$, $u_t = y_t$, $u = y$, and $c_t = \lambda_t$ so that $\gamma_{t-1}c_{t-1} \leq \gamma_tc_t$ (from the assumption in~\cref{prop:a1_OPN}), we can use~\cref{prop:a3_OPN} to bound the first two summation in the definition of $\Lambda_t$.
    After applying a similar bound to the latter two summations in $\sum\limits_{t=1}^{k} \Lambda_t$,
    we plug the resulting bound back in~\cref{prop:a1_OPN}, which bounds the primal-dual gap.
    The proof is complete by invoking convexity of $Q$
    and dividing through by the sum over $\gamma_t$.
\end{proof}

We can now specify an explicit step size to obtain the following convergence rate. 
The proof is similar to~\cite[Corollary 4.3]{lan2020first}, so we skip it.
\begin{corollary} \label{cor:a1_OPN}
    Let $\eta_t = \frac{3\sqrt{2}\|H\|D_Yt + 6M_Xt^{3/2}}{2\sqrt{2}D_Xt}$, $\lambda_t=\frac{3\sqrt{2}\|H\|D_Xt + 6M_Yt^{3/2}}{2\sqrt{2}D_Yt}$, and $\zeta_t=\frac{t-1}{t}$, where $M_I$ and $O_I$ for $I \in \{X,Y\}$ are defined in~\cref{thm:a1_OPN}. If $K \in \mathcal S$ and $\mathcal{E}(\delta)$ take place for some $\delta \in (0,1/e]$, then with probability $1-2(k+1)\delta$,
    \begin{align*}
        g(\bar{z}_k)
        &\leq 
        \frac{12\|H\|D_XD_Y}{k+1} + \frac{2(48 + 3\sqrt{2} + \frac{16\sqrt{2}}{\sqrt{3}} \sqrt{\ln \frac{1}{\delta} })(D_XM_X + D_YM_Y)}{\sqrt{k}} \\
        &\hspace{10pt} + \frac{24}{\sqrt k} \Big[ C^2( D_XM_X + D_YM_Y )\rho^{2\tau} + \big( \frac{D_XO_X^2}{2M_X} + \frac{D_YO_Y^2}{2M_Y} \big)\delta \Big] \\
        &\hspace{10pt} + C \big( D_XM_X + D_YM_Y \big)\rho^\tau + (D_XO_X + D_YO_Y)\sqrt{\delta}.
    \end{align*}
\end{corollary}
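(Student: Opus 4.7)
The plan is to instantiate~\cref{thm:a1_OPN} with weights $\gamma_t = t$ and the stated step sizes, verify the hypotheses of~\cref{prop:a1_OPN}, and then bound every sum that appears in the resulting inequality term by term. For the monotonicity hypotheses, with $\gamma_t = t$ and $\theta_t = (t-1)/t$ we immediately get $\gamma_t\theta_t = t-1 = \gamma_{t-1}$; rewriting
\[
\eta_t = \frac{3\|H\|D_Y}{2D_X} + \frac{3\varsigma_X\sqrt{t}}{2\sqrt{2}\,D_X}, \qquad \lambda_t = \frac{3\|H\|D_X}{2D_Y} + \frac{3\varsigma_Y\sqrt{t}}{2\sqrt{2}\,D_Y},
\]
shows that both $\gamma_t\eta_t$ and $\gamma_t\lambda_t$ are explicitly increasing in $t$. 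For the contraction hypothesis $\|H\|^2/(p\lambda_t) \leq q\eta_t$, I would multiply the two step sizes to get $\eta_t\lambda_t \geq (3\|H\|D_Y/(2D_X))(3\|H\|D_X/(2D_Y)) = 9\|H\|^2/4$, which permits taking $p = q = 2/3$ and hence $1-p = 1-q = 1/3$.

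The second step is a term-by-term substitution into~\cref{thm:a1_OPN}. The deterministic leading term $2\gamma_k(\eta_kD_X^2 + \lambda_kD_Y^2)$ splits, after dividing by $\sum_{t=1}^k \gamma_t = k(k+1)/2$, into an $O(\|H\|D_XD_Y/(k+1))$ piece responsible for the $12\|H\|D_XD_Y/(k+1)$ in the statement, plus an $O((D_X\varsigma_X + D_Y\varsigma_Y)/\sqrt{k})$ piece coming from the $\sqrt{t}$-parts of $\eta_k, \lambda_k$. The martingale-tail factor $\sqrt{8\sum_{t=1}^k \gamma_t^2 \ln(1/\delta)}$ is bounded using $\sum_{t=1}^k t^2 \leq (k+1)^3/3$, and after normalization contributes the $(4\sqrt{2}/\sqrt{3})\sqrt{\ln(1/\delta)}/\sqrt{k}$ factor attached to $D_X\varsigma_X + D_Y\varsigma_Y$. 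For every remaining term of the form $\sum_t \gamma_t\cdot(\cdot)/\eta_t$ or $\sum_t \gamma_t\cdot(\cdot)/\lambda_t$, I would use the lower bounds $\eta_t \geq 3\varsigma_X\sqrt{t}/(2\sqrt{2}\,D_X)$ and $\lambda_t \geq 3\varsigma_Y\sqrt{t}/(2\sqrt{2}\,D_Y)$ to reduce these to sums of the form $\sum_t \sqrt{t} \leq \tfrac{2}{3}(k+1)^{3/2}$, all of which yield $1/\sqrt{k}$ scaling after dividing by $k(k+1)/2$. The single-power bias terms $C(M_XD_X + M_YD_Y)\rho^\tau + (O_XD_X + O_YD_Y)\sqrt{\delta}$ carry through unchanged because $\sum_t \gamma_t$ divides itself.

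The main obstacle is purely bookkeeping: tracking the factor $1/(1-p) = 1/(1-q) = 3$ from the contraction hypothesis together with the $\sqrt{2}$ that enters through the denominators of $\eta_t$ and $\lambda_t$, and combining them with the constants produced by $\sum_{t=1}^k\sqrt{t} \leq \tfrac{2}{3}(k+1)^{3/2}$ and $\sum_{t=1}^k t^2 \leq (k+1)^3/3$, so that they collapse to the stated numerical coefficients $12$, $48$, $3\sqrt{2}$, and $4\sqrt{2}/\sqrt{3}$. No new probabilistic argument is needed: the failure probability $2(k+1)\delta$ comes directly from the high-probability event already established in~\cref{thm:a1_OPN}.
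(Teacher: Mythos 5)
Your proposal is correct and follows essentially the same route as the paper: instantiate \cref{thm:a1_OPN} with $\gamma_t=t$ and $p=q=2/3$, check the hypotheses of \cref{prop:a1_OPN} (which you do in more detail than the paper), bound $\sum_t\gamma_t/\eta_t$ and $\sum_t\gamma_t/\lambda_t$ via the $\sqrt{t}$-lower bounds on the step sizes, and collect constants. One nit: to recover the stated coefficient $4\sqrt{2}/\sqrt{3}$ exactly you should use $\sum_{t=1}^k\gamma_t^2\leq k(k+1)^2/3$ (as the paper does) rather than $(k+1)^3/3$, since the latter leaves a residual $\sqrt{(k+1)/k}$ factor.
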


In view of~\cref{lem:lqgp_a29} and assuming the mixing time $\tau$ and added noise $\sigma$ are sufficiently large, the above result tells us $\bar{z}_k$ satisfies $\|\bar{z}_k-z^*\|^2 \leq O(\frac{\|H\|^2}{\mu^2 k^2} + \frac{\mathrm{var}}{\mu^2 k})$ with high probability, where $z^* \equiv [\vartheta^*; y^*]$ is the optimal primal-dual solution to~\eqref{eq:a10_OPN}, ``$\mathrm{var}$'' bounds stochastic estimation errors, and $k$ is the number of iterations. 
It turns out one can further modify the algorithm to decrease the overall error faster.

To do so, we introduce a shrinking multi-epoch algorithm in~\cref{alg:a2_OPN}. It repeatedly calls~\cref{alg:a1_OPN} and uses the solution to the previous call to warm-start the next call. We name each call an \textit{epoch}. Each epoch $s$ will use an updated feasible set for the primal variable $X_s$ that is shrunk from the previous epoch. The feasible set for the dual variable $Y$ remains unchanged. A key feature is that the optimal solution to~\eqref{eq:a10_OPN} is still feasible every epoch, i.e., $\vartheta^* \in X_s$. Since $X_s$ gets smaller, we also update the diameter $D_{X_s}$. The choice of step sizes still adheres to those prescribed in~\cref{cor:a1_OPN} up to the difference in the diameter $D_{X_s}$, while the number of iterations $k_s$ increases each epoch. 

\begin{algorithm}
\caption{Shrinking multi-epoch CSPD}
\label{alg:a2_OPN}
\begin{algorithmic}
\STATE{Input: $X \subseteq \mathbb{R}^n$; $p_0 \in X$; $D_0 \in \mathbb{R}_{++}$; $k, \tau, S \in \mathbb{Z}_{++}$}
\FOR{$s=1,\ldots,S$}
\STATE{$D_s^2 := 2^{-(s-1)} D_0^2$}
\STATE{$X_s := \{\vartheta \in X : \|p_{s-1}- \vartheta\|^2 \leq D_s^2 \}$}
\STATE{$p_{s} \gets$~\cref{alg:a1_OPN} with $X_s$, $p_{s-1}$, $k_s$, $\tau_s$, $\delta_s$, $\{\eta_t,\lambda_t,\zeta_t\}_t$ specified by~\cref{cor:a1_OPN,prop:a2_OPN}}
\ENDFOR
\RETURN $p_S$
\end{algorithmic}
\end{algorithm}

We now establish the main convergence result, similar to~\cite[Lemma 4.5]{lan2020first}. 
\begin{proposition} \label{prop:a2_OPN}
    Consider the same assumptions and parameters as~\cref{cor:a1_OPN}. Suppose $\|p_0-\vartheta^*\|^2 \leq D_0^2$ and $\sigma$ satisfies the bound from~\cref{lem:lqgp_a29}.
    Set
    \begin{align*}
        k_s 
        &=
        \Big \lceil 400 \max \Big\{  \frac{\|H\| D_Y}{\mu}, \frac{4000 + 256\ln (1/\delta)}{\mu^2} \big( M_X^2 + \frac{D_Y^2M_Y^2}{D_0^2} \cdot 2^{s} \big) \Big\} \Big \rceil \\
        \tau_s 
        &\equiv \tau 
        =
        \Big \lceil \frac{\max\big\{ \frac{1}{2} \ln(\frac{228C^2}{\mu\sqrt{\epsilon}} (D_XM_X + D_YM_Y), \ln(\frac{72C}{\mu\sqrt{\epsilon}}(D_XM_X + D_YM_Y) \big\}}{\ln(1/\rho)} \Big \rceil \\
        \delta_s
        &\equiv \delta
        \leq
        \min\big\{ \sqrt{\frac{\epsilon \mu^2}{18}}\big(\frac{D_XO_X^2}{2M_X} + \frac{D_YO_Y^2}{2M_Y}\big), \frac{\epsilon}{18\mu}(D_XO_X + D_YO_Y)^2 \big\}.
    \end{align*}
    Choosing the number of epochs as $S = \big \lceil \log_2 \frac{D_0^2}{\epsilon} \big \rceil$, then $\|p_S- \vartheta^*\|^2 \leq \epsilon$ with probability at least $1-2(N_S + S)\delta$, and the total iterations is at most
    \begin{align*}
        N_S 
        := 
        O (1) \Big (\frac{ \|H\|D_Y }{\mu} \ln\big(\frac{D_0}{\epsilon}\big) 
        + \frac{1 + \ln(1/\delta)}{\mu^2} \big( M_X^2 \ln\Big(\frac{D_0}{\epsilon}\Big) + \frac{D_Y^2 M_{Y}^2}{\epsilon}\big) \Big), 
    \end{align*}
    while the sample complexity is $2N_S\tau$. Here, $O(1)$ is some absolute constant. 
\end{proposition}
{\begin{proof}
    We will show by mathematical induction that for all $s=0,\ldots,S$, $\|p_s-\vartheta^*\| \leq 2^{-s} D_0^2$ with probability $1-\sum\limits_{i=1}^s 2(k_i +1)\delta$.
    The base case of $s=0$ is true by the assumption on $p_0$. 
    
    Now suppose the claim is true for $s-1$.
    By the inductive hypothesis, $\|p_{s-1}-\vartheta^*\|^2 \leq 2^{-s+1}D_0^2$ with probability $1-\sum\limits_{i=1}^{s-1}2(k_i+1) \delta$. Therefore, the optimal solution is still in the feasible region, i.e., $\vartheta^* \in X_s$. 
    Now, conditioned on the success of the previous epochs,~\cref{cor:a1_OPN} combined with our choice in parameters $k_s$, $\tau_s$, and $\delta_s$ guarantees the primal-dual gap function respects with probability $1-2(k_s+1)\delta$,
    $g([p_s; y_s]) \leq \mu \sqrt{2^{-s} D_0^2}$,
    where $[p_s,y_s]$ is primal-dual solution from the $s$-th call to~\cref{alg:a2_OPN}.
    Squaring both sides and using~\cref{lem:lqgp_a29} (recall we assumed $K \in \mathcal S$ from~\cref{cor:a1_OPN}), we conclude $\mu^2\|p_s-\vartheta^*\|^2 \leq g([p_s; y_s])^2 \leq \mu^2 \cdot 2^{-s} D_0^2$.
    By union bound, this holds unconditionally with probability $1-\sum\limits_{i=1}^s 2(k_s+1)\delta$, and this completes the proof by induction.
    
    Finally, the total number of iterations can be derived by summing $k_s$ across all epochs $s$.

\end{proof}
}

The above result improves upon~\cref{cor:a1_OPN} by reducing the error term corresponding to $\|H\|$ (i.e., deterministic error) without worsening the stochastic estimation errors.
Next, we apply this generic primal-dual method to the setting of estimating the natural gradient $E_{K_t}$ within the natural policy gradient method.

\subsection{Combining natural policy gradient with the primal-dual method to solve LQR} \label{sec:a2_LQR}
\hide{}{Finally, let us recall that in policy evaluation, we use the primal-dual method of~\cref{alg:a2_OPN} to minimize $\|x-x^*\|_2^2$ by solving 
\begin{align*}
    \min_{x \in X} \|Ax-b\|_2 = \min_{x \in X}\max_{y : \|y\|_2 \leq 1} \langle y, Ax-b \rangle.
\end{align*}
In view of the general min-max problem~\eqref{eq:a10_OPN}, we have $Y := \{y : \|y\|_2 \leq 1\}$, so 
\begin{align*}
    \Omega_Y &\equiv \max_{y \in Y} \|y\| = 1\\
    D_Y &\equiv \max_{y,y' \in Y} \frac{1}{2}\|y-y'\|^2 \leq 2.
\end{align*}
Recall that constant $\Omega_Y$ appears in the definition of the gradient and variance bound $M_X$ and $\varsigma_X$ for the primal player in~\eqref{eq:a8_OPN}, while $D_Y$ appears in the final iteration and sampling complexity of~\cref{cor:a2_OPN}. 

We are now ready to provide the sampling complexity from all the multiplication invocations of policy evaluation~\cref{alg:a2_OPN}) from policy optimization~\cref{alg:b1_LQR}). First, let us recall the constants $C_1$, $C_2$, and $C_3$ from~\eqref{eq:b1_LQR}, which are used for policy optimization. If we view all norms and smallest singular values on the system parameters $A$, $B$, $Q$, $R$, as well as the noise $\Psi$, $\Psi_\sigma$, and $\sigma^2$ as constants, we can bound the size of these constants as 
\begin{align*}
    C_1 
    &= O(1)J(K_0) \\
    C_2 &= 
    O(1) \\ 
    C_3 &= O(1)J(K_0). 
\end{align*}
In view of these constants, we know that we can ensure the stability of all controllers $K_t$ and $J(K_{N_{\text{out}}}) - J(K^*) \leq \varepsilon$ by setting the number of policy optimization iterations to be
\begin{align*}
    N_{\text{out}} 
    &:= 
    \lceil 8J(K^*)C_1/C_2 \rceil \cdot \lceil \log_2(J(K_0)/\varepsilon) \rceil = O(1)J(K^*)J(K_0) \ln(J(K_0)/\varepsilon), 
\end{align*}
while ensuring that each call to policy evaluation returns an $p_S$ such that $\|p_S-x^*\|^2 \leq \min\{C_4,C_5,C_6\}$, where
\begin{align*} 
    C_4
    &= 
    O(1),  \\
    C_5 
    &=
    O(1)\min \Big( \frac{1}{J(K_0)^2 J(K^*)}, \frac{1}{J(K_0)^3} \Big) \varepsilon,  \\
    C_6
    &=
    O(1)\Big( \frac{1}{J(K_0)^4} \varepsilon \Big)^{2/3}.
\end{align*}
In the complexity results below, we also view the norm of the initial state-action pair $X_0 \equiv (x_0,u_0)$, the controllers $K_t$, and the optimal cost $J(K^*)$ as constants. Moreover, we view any variable inside the log as a constant except for the failure rate $\delta$ and accuracy $\varepsilon$. In the following result, recall $u_t \in \mathbb{R}^k$. 
}
We can now prove $\tilde{O}(1/\varepsilon)$ samples suffice to solve LQR. 
First, define the radius $R_* = (\|Q\|_F + \|R\|_F) + (\|A\|_F^2 + \|B\|_F^2) \cdot \frac{ J(K_0)}{\sigma_{\min}(\Psi)}$. 


\begin{theorem} \label{thm:a3_OPN}
    Suppose $K_0 \in \mathcal S$ and $\varepsilon \in (0, J(K_0)]$, and set $\sigma^2 = \sigma_{\min}(\Psi) + 2\sigma_{\min}(R)^{-1}J(K_0)$ in~\eqref{eq:randomized_linear_state_feedback}.
    Run~\cref{alg:b1_LQR} with the  step size from~\cref{thm:b1_LQR} and $T=O(J(K_0)^2\ln(1/\varepsilon))$. 
    To compute the natural gradient via~\cref{alg:a2_OPN}, use the parameters from~\cref{prop:a2_OPN}, $D_0 = 2R_*$, $\epsilon = O(\varepsilon/J(K_0)^4)$, ${\delta} \in (0,1/e]$, and choose any initial solution $p_0$ satisfying $\|p_0\| \leq R_*$. 
    Then~\cref{alg:b1_LQR} outputs $K_T$ s.t. $J(K_T) - J(K^*) \leq \varepsilon$ with probability $1-8\bar{N} \delta$, and the total number of samples is 
    \begin{align*}
        \bar{N}
        &:= 
        O\Big( \frac{J(K_0)^{23} \cdot m^4(n+m) \cdot [\ln(1/{\delta}) + \ln(1/\varepsilon)]^{7} }{\varepsilon} \Big).
    \end{align*}
    Big-O hides dependence on absolute constants;  $A$, $B$, and $\Psi$ from~\eqref{dynamics}; $Q$ and $R$ from~\eqref{ergodic_cost}; and logarithmic dependence on $J(K_0)$, $n$, and $m$. 
\end{theorem}
\begin{proof}
    Our goal is to apply~\cref{thm:b1_LQR}. To do so, we need to verify the natural gradient estimation error is $\|E_{K_t} - E_{K_t}^\star\|_F^2 \leq \epsilon' := \min\{C_5, C_6, C_7\} = O(\varepsilon/J(K_0)^3)$, where the constants are defined in~\cref{thm:b1_LQR}. If this holds, then by our choice in $T$, ~\cref{thm:b1_LQR} guarantees the final iterate $K_T$ satisfies $J(K_T) - J(K^*) \leq \varepsilon$.

    To show the desired accuracy, recall from~\eqref{eq:natgrad},~\eqref{eq:phi_def}, and~\eqref{eq:linsys_bellman} the relation between the desired natural gradient $E_{K_t}$ and an approximate solution $\vartheta(K_t)$ for~\cref{eq:linsys_bellman} returned by~\cref{alg:a2_OPN} (denote $\vartheta^* \equiv \vartheta(K_t)^\star$ as the optimal solution):
    \begin{align*}
        \|E_{K_t} - E_{K_t}^\star\|^2_F
        &=
        \|\Theta(K_t)_{22}K_t - \Theta(K_t)_{21} - \Theta(K_t)^\star_{22}K_t - \Theta(K_t)^\star_{21}\|_F^2 \\
        &\leq
        2\|\Theta(K_t)_{22} - 
        \Theta(K_t)^\star_{22}\|_F^2\|K_t\|_F^2 + 2\|\Theta(K_t)_{21} - \Theta(K_t)^\star_{21}\|_F^2 \\
        &\leq
        2(\|K_t\|_F^2+1)\|\theta(K_t) - \theta(K_t)^\star\|^2 \\
        &\leq
        2(\|K_t\|_F^2+1)\|\vartheta(K_t) - \vartheta(K_t)^\star\|^2.
    \end{align*}
    Hence, we require accuracy $\|\vartheta(K_t) - \vartheta(K_t)^\star\|^2 \leq \epsilon := \epsilon'/(2(\|K_t\|_F^2+1))$.
    
    For us to use~\cref{prop:a2_OPN} to show $\|\vartheta(K_t) - \vartheta(K_t)^\star\|_2^2 \leq \epsilon$, we must verify its assumptions. Suppose event $\mathcal{E}({\delta})$ from~\eqref{eq:qgp_a57} occurs (we will bound the probability of failure at the end of the proof). 
    We know $K_t \in \mathcal S$ at iteration $t$ by~\cref{thm:b1_LQR}. 
    And by choice of $\sigma$, we can satisfy the assumption for~\cref{lem:lqgp_a29} because
    \begin{align*}
        \sigma^2
        &=
        \sigma_{\min}(\Psi)(1 + 2[\sigma_{\min}(\Psi)\sigma_{\min}(R)]^{-1}J(K_0)) \\
        &\geq
        \sigma_{\min}(\Psi)(1 + [\sigma_{\min}(\Psi)\sigma_{\min}(R)]^{-1}J(K_t)) 
        \geq
        \sigma_{\min}(\Psi)(1 + \|K_t\|_F^2),
    \end{align*}
    where the first inequality used $J(K_t) \leq 2J(K_0)$ from~\cref{thm:b1_LQR} and the second inequality applied~\cref{lem:lqgp_a13}.
    Finally, the choice in $D_0$ and assumption $\|p_0\| \leq R_*$ guarantees $\|p_0-\vartheta^*\|^2 \leq D_0^2$ ($\|\vartheta^*\|^2 \leq R_*^2$ can be shown by~\cite[Eqn 5.4]{yang2019provably} and~\cref{lem:lqgp_a13}). 
    So, we have verified all assumptions required by~\cref{prop:a2_OPN}.

    Having verified the assumptions for~\cref{prop:a2_OPN}, let us now bound $N_S$, $\tau$, and $\delta$ to achieve accuracy $\epsilon$. In view of $N_S$ from~\cref{prop:a2_OPN}, then combining the constants derived in~\cref{thm:a1_OPN},~\cref{lem:a1_LQR}, the assumption $\|\vartheta_0\| \leq R_* = O(J(K_0))$,~\cref{lem:a6_OPN},~\cref{lem:lqgp_a22},~\cref{lem:lqgp_a29}, choice of $\sigma^2$, and $D_{X_s} \leq D_0 = O(J(K_0))$ (since we shrink the feasible region), then
    $
        N_S 
        = 
        O\Big( \frac{\|K_t\|^8_F \cdot J(K_0)^{10}  \cdot m^4(n+m) (\ln(1/\delta))^{5}}{(1-\rho(A-BK_t))^2\epsilon} \Big).
    $
    \pedant{}{ 
        To see this (we will only show dependence on largest constants), recall from $N_S$ in~\cref{prop:a2_OPN} and the terms in~\cref{thm:a1_OPN},
        \begin{align*}
            N_S 
            &=         
            \frac{\|H\|D_Y}{\mu}\ln(\frac{1}{\epsilon}) + \frac{\ln(1/\delta)}{\mu^2}\Big(M_X^2\ln(\frac{D_0}{\epsilon}) + \frac{D_Y^2M_Y^2}{\epsilon} \Big) \\
            M_X 
            &= 
            M_H\Omega_Y(\ln\frac{1}{\delta})^\beta \\
            M_Y
            &=
            (M_H\Omega_X+M_b)(\ln\frac{1}{\delta})^\beta \\
            \Omega_X
            &=
            \|\vartheta^*\| + (1-\bar{\zeta})\sqrt{2}D_X
        \end{align*}
        where one can show $\Omega_Y = 1$, $D_Y = O(1)$, and $\bar{\zeta} \leq 2$.  \newline
        Now,~\cref{lem:a1_LQR} says $\beta = 2$ (along with $\|x_0\| = R_* = O(J(K))$ from~\cref{lem:lqgp_a30}),
        \begin{align*}
            M_H
            &=
            \|K\|^2(J(K) + \|x_0\|^2 + \sigma^2 \cdot m)^2
            =
            \|K\|^4 J(K_0)^4 m^2 \\
            M_b &= M_H =  \|K\|^4 J(K_0)^4 m^2 \\
            \Omega_X &= R_* = J(K_0).
        \end{align*}
        Thus,
        \begin{align*}
            M_X &= \|K\|^4 J(K_0)^4 m^2 (\ln \frac{1}{\delta})^2 \\
            M_Y &= \|K\|^4 J(K_0)^5 m^2 (\ln \frac{1}{\delta})^2.
        \end{align*}
        Meanwhile,~\cref{lem:a6_OPN} guarantees
        \begin{align*}
            \|H\| 
            \leq
            \|K\|_F^2J(K) + \sigma^2 \cdot m
            \leq
            (\|K\|_F^2 + m)J(K) 
            \leq
            m\|K\|_F^2J(K).
        \end{align*}
        On the other hand,~\cref{lem:lqgp_a29} says (recall we ignore constants like $\sigma_{\min}(\Psi)$)
        \begin{align*}
            \frac{1}{\mu}
            &\leq 
            O(1) \frac{\sqrt{n+m}}{1-\rho^2}.
        \end{align*}
        Bringing everything together, we have $\|H\| \leq M_X^2 \leq M_Y^2$, and so
        \begin{align*}
            N_S 
            &=         
            \frac{\ln(1/\delta)}{\mu^2} \cdot  \frac{D_Y^2M_Y^2}{\epsilon}\\
            &\leq
            \ln(1/\delta) \cdot \frac{(n+m)}{(1-\rho^2)^2} \cdot \|K\|^8 J(K_0)^{10} m^4 (\ln\frac{1}{\delta})^4 \cdot \frac{1}{\epsilon} \\
            &\leq
            \frac{\|K\|_F^{8} J(K_0)^{10} m^4 \cdot (n+m)}{(1-\rho^2)^2}  (\ln\frac{1}{\delta})^5 \cdot \frac{1}{\epsilon}
        \end{align*}
    }
    To bound $\tau$ from~\cref{prop:a2_OPN}, we use~\cref{cor:qgp_a1}, $\frac{1}{\log(1/u)} \leq \frac{1}{1-u}$ for $u \in (0,1)$, and~\cref{lem:lqgp_a22} to show
    $
        \tau = O\big( \frac{\ln(n/\epsilon)}{\log(1/\rho(A-BK_t))} \big) \leq O\big( \frac{\ln(n/\epsilon)}{1-\rho(A-BK_t)} \big).
    $
    Finally, combining~\cref{cor:qgp_a1},~\cref{thm:a1_OPN}, and~\cref{prop:a2_OPN}, we need ${\delta} \leq O(\epsilon)$,
    and so we choose ${\delta}$ so that $\delta \leq O(\epsilon)$ (this will not affect the complexity in terms of big-O notation, as shown below).

    Altogether, the total number of samples in each call to~\cref{alg:a2_OPN} is
    \begin{align*}
        N_S \cdot \tau 
        &\leq
        O\Big( \frac{\|K_t\|^8_F \cdot J(K_0)^{10}  \cdot m^4(n+m)[\ln(1/ \delta) + \ln(n/\epsilon)]^{6} }{(1-\rho(A-BK_t))^3\epsilon} \Big) \\
        &\leq
        O( {J(K_0)^{21} \cdot m^4(n+m) [\ln(1/\delta) + \ln(n/\varepsilon)]^{6} }/{\varepsilon}),
    \end{align*}
    where the second line used~\cref{cor:qgp_a1} and $(1-\sqrt{1-u})^{-1} \leq 2u^{-1}$ for $u \in (0,1)$, 
    $\epsilon = O(\varepsilon/(J(K_0)^3\|K_t\|_F^2)) \geq O(\varepsilon/J(K_0)^4)$,~\cref{lem:lqgp_a13}, and $J(K_t) \leq 2J(K_0)$ from~\cref{thm:b1_LQR} (and assumption $\varepsilon \leq J(K_0)$). Now, since~\cref{alg:a2_OPN} is called during each of the $T = O(J(K_0)^2 \ln(1/\varepsilon))$ iterations in~\cref{alg:b1_LQR}, then the total number of samples is $\bar{N} = N_S \cdot \tau \cdot T$, and the probability of success is $1 - T [2(N_S+S) \delta + 2(N_S+S) \delta] \geq 1-8\bar{N}{\delta}$,
    where the first term $2(N_S+S){\delta}$ is the failure from~\cref{prop:a2_OPN} and the second term is from event $\mathcal{E}(\delta)$ from~\eqref{eq:qgp_a57} not occurring.
    This completes the proof for estimating
    every natural gradient $E_{K_t}$ up to accuracy $\epsilon'$, and from~\cref{thm:b1_LQR} it guarantees $J(K_T) - J(K^*) \leq \varepsilon$.
\end{proof}
\hide{}{
\begin{proof}
    \edits{Our goal is to apply~\cref{thm:b1_LQR}. To do so, we need to verify the natural gradient estimation error is at most $\epsilon' := \min\{C_4, C_5, C_6\}$, where the constants, as defined in the aforementioned theorem, are on the order of
    \begin{align*}
        C_4 &= \big(\frac{\sigma_{\min}(Q) \sigma_{\min}(\Psi)C_1}{32\|B\|} \big)^2 \frac{1}{J(K_0)^2} = O\big(\frac{1}{J(K_0)^2}\big) \\
        C_5 &= \big(\frac{\sigma_{\min}(\Psi_\sigma)C_2}{3840\|B\|C_3^2}\big)^{2/3} \cdot \frac{\varepsilon^{2/3}}{J(K_0)^{2/3}} = O\big(\frac{\varepsilon^{2/3}}{J(K_0)^2}\big) \\
        C_6 &= \min\big\{ \frac{C_2^2\varepsilon}{120\sigma_{\min}(\Psi)} \cdot \frac{1}{J(K_0)J(K^*)}, \frac{C_2 \varepsilon }{240C_1C_3J(K_0)} \big\} 
        = O \big(  \frac{\varepsilon}{J(K_0)^3} \big).
    \end{align*}
    Hence, $\epsilon' = O(\varepsilon/J(K_0)^3)$. Hence, once we show $\|E_{K_t} - E_{K_t}^\star\|_F^2 \leq \epsilon'$, then we can guarantee the final iterate $K_T$ satisfies $J(K_T) - J(K^*) \leq \varepsilon$.}

    \edits{Now, we will establish the accuracy needed for evaluating the natural gradient in~\cref{alg:a2_OPN}. Recall from~\eqref{eq:natgrad},~\eqref{eq:phi_def}, and~\eqref{eq:linsys_bellman} the relation between the natural gradient $E_{K_t}$ and the vector $\vartheta(K_t)$ solve by~\cref{alg:a2_OPN}:
    \begin{align*}
        &\|\overbrace{\Theta(K_t)^\star_{22}K_t - \Theta(K_t)^\star_{21}}^{E_{K_t}^\star} - \overbrace{\Theta(K_t)_{22}K_t - \Theta(K_t)_{21}}^{E_{K_t}} \|_F^2 \\
        &\leq
        2\|\Theta(K_t)^\star_{22} - \Theta(K_t)_{22}\|_F^2\|K_t\|_F^2 + 2\|\Theta(K_t)^\star_{21} - \Theta(K_t)_{21}\|_F^2 \\
        &\leq
        2(\|K_t\|_F^2+1)\|\theta(K_t)^\star - \theta(K_t)\|_2^2 \\
        &\leq
        2(\|K_t\|_F^2+1)\|\vartheta(K_t)^\star - \vartheta(K_t)\|_2^2
    \end{align*}
    Hence, we need $\|\vartheta(K_t)^\star - \vartheta(K_t)\|_2^2 \leq \epsilon := \epsilon'/(2(\|K_t\|_F^2+1))$ from~\cref{alg:a2_OPN}.}
    
    \edits{Let us now specify the total number of iterations $N_S$, mixing time $\tau$, and failure rate $\bar{\delta}$ (c.f., $\delta$ is the chosen failure rate in the statement of the theorem) used in~\cref{prop:a2_OPN}\footnote{For easy of exposition, we only show polynomial dependence on the cost $J(K)$, control dimension $k$, norm of policy $\|K\|_F$, and accuracy $\epsilon$. We also show logarithmic dependence on $\epsilon$ and the state dimension $n$.} to achieve the desired accuracy $\epsilon$. From $M_X$ and $M_Y$ in~\cref{thm:a1_OPN} combined with~\cref{lem:a1_LQR} and the assumption $\|\vartheta_0\| = O(R_*) = O(J(K))$ (\cref{lem:lqgp_a30}), then 
    $M_X = M_Y = O\big((J(K)^2+k)^2\|K\|^4_F\big( \ln \frac{1}{\delta} \big)^\beta \big)$ 
    where $\beta = 5/4$ (\cref{lem:lqgp_a22}). Since we only shrink the feasible region, then $D_{X_s} \leq D_0 = O(R_*) = O(J(K))$. According to~\cref{lem:a6_OPN}, $\|H\| = O(\|K\|_F^2J(K) + k)$. Finally, we have $\mu^{-1} \equiv \sigma_{\min}(H)^{-1} = O([\|K\|_F^2J(K) + k]J(K))$ via~\cref{lem:lqgp_a22}. Putting everything together,
    \begin{align*}
        N_S &= O\Big( \frac{\|K\|^8 J(K)^2[J(K)+k]^6(\ln(1/\delta))^{7/2}}{\epsilon} \Big).
    \end{align*}
    To bound $\tau$, we observe from~\cref{lem:lqgp_a22} that $C = O(\sqrt{n})$. Then using~\cref{cor:qgp_a1} and the simple bound $\frac{1}{1-\sqrt{1-u}} \leq 2/u$ for any $u \in (0,1)$, 
    \begin{align*}
        \tau &= O\big( \frac{\ln(n/\epsilon)}{\log(1/\rho)} \big) = O\big( \frac{\ln(n/\epsilon)}{1-\rho} \big)  \leq O(J(K) \ln(n/\epsilon)) 
    \end{align*} 
    Combining $O_X$ and $O_Y$ from~\cref{thm:a1_OPN} with the bounds in~\cref{cor:qgp_a1}, we conclude $O_X = O_Y = O(n^2)$. Then we require $\bar{\delta} \leq O(\epsilon/n^4)$, and so we set $\bar{\delta} = \min\{\delta, O(\epsilon/n^4)\}$. }

    \edits{Putting everything together, the total number of samples in each call to~\cref{alg:a2_OPN} is
    \begin{align*}
        N_S \cdot \tau 
        &=
        O\Big( \frac{\|K_t\|^8 J(K_t)^3[J(K_t)+k]^6(\ln(1/\delta) + \ln(n/\epsilon))^{9/2} }{\epsilon} \Big) \\
        &\leq
        O\Big( \frac{\bar{K}^{10} J(K_0)^6[J(K_0)+k]^6(\ln(1/\delta) + \ln(n/\varepsilon))^{9/2} }{\varepsilon} \Big),
    \end{align*}
    where the last line uses $\epsilon = O(\varepsilon/(J(K_0)^3\|K_t\|_F^2))$, definition of $\bar{K}$, and $J(K_t) \leq 2J(K_0)$ from~\cref{thm:b1_LQR}. Now, since~\cref{alg:a2_OPN} is called during each of the $T$ iterations in~\cref{alg:b1_LQR}, then the total number of samples is $\bar{N} \cdot T = N_S \cdot \tau \cdot \ln(1/\varepsilon)$. Moreover, the probability of success is
    \begin{align*}
        1 - O(\ln(1/\varepsilon)) [\overbrace{2(N_S+S)\delta }^{\text{failure of~\cref{prop:a2_OPN}}} + \overbrace{2(N_S+S) \delta}^{\text{event $\mathcal{E}$~\eqref{eq:qgp_a57} does not occur}}] 
        &\geq
        1-O(\bar{N}\ln(1/\varepsilon))\delta, 
    \end{align*}
    which finishes the proof.}
\end{proof}
}

Some comments are in order. 
Assuming $\varepsilon \leq J(K_0)$ is mild, otherwise the initial $K_0$ suffices since $J(K_0)-J(K^*) < \varepsilon$. 
So the only non-trivial assumption we make is $K_0$ being stable. In contrast, prior works either arbitrarily postulate a uniform bound for $\bar{K} = \max_{0 \leq t \leq T-1} \|K_t\|$ and $\rho_T = \max_{0 \leq t \leq T-1} \rho(A-BK_t)$~\cite{zeng2021two,zhou2022single} or have their sampling complexity depend on $\bar{K}$ and $\rho_T$~\cite{yang2019provably}. We do neither. Instead, our algorithm ensures $\bar{K}^2$ and $\rho_T$ are bounded by $O(J(K_0))$. This is why our dependence on $J(K_0)$ is so large. 
We believe this dependence may be overconservative, as in our experiments we observed a less acute dependence on $J(K_0)$.  

\section{Numerical Experiments} \label{sec:exper}
We now run numerical experiments with our actor-critic algorithm. The code is at \texttt{https://github.com/jucaleb4/online-lqr}. First, let us describe the implementation details.

\subsection{Implementation details} \label{sec:implementation_details}
We implemented three algorithms: multi-epoch natural policy gradient (NPG), which is the combination of NPG~\cref{alg:b1_LQR} (actor) with the multi-epoch scheme~\cref{alg:a2_OPN} (critic); 
single-epoch NPG which uses the single-epoch critic~\cref{alg:a1_OPN};  
and two-time scale actor-critic (AC) method~\cite{zeng2021two}, which achieves the sampling complexity of $\tilde{O}(\varepsilon^{-3/2})$ for the online (single trajectory) setting.
We fine tuned all methods using a simple grid search similarly to~\cite{lan2012validation}, which includes the parameters (whenever applicable to the method) for the step sizes, mixing times $\tau$, initial diameter $D_0$, and number of epochs $S$. 
See our code for the exact parameter values.
We also applied a mini-batch to the gradient estimates for both the single- and multi-epoch scheme (but not the two-time scale actor-critic method) to reduce variance and produce higher accuracy results.
More specifically, our mini-batch scheme collects $m_{\text{batch}}=10$ (found by manually tuning) estimates of the gradient, each of which involves $\tau$ samples, sequentially along the same trajectory and then averages the $m_{\text{batch}}$ estimates to form a single estimate. 
Thus, the number of samples for a single gradient increases by a factor of $m_\text{batch}$.

\subsection{Simple synthetic problem}
Consider the synthetic problem from~\cite{dean2020sample}:
\begin{align*}
    A = \begin{bmatrix}[r] 1.01 & 0.01 & 0 \\ 0.01 & 1.01 & 0.01 \\ 0 & 0.01 & 1.01 \end{bmatrix},
    ~~
    B = R = \Psi = I_3,
    ~~
    Q=10^{-3}I_3,
\end{align*}
where $I_3$ is the identity matrix of dimension $3 \times 3$. We also set the added noise as $\sigma=1$. 
We call this problem the \textit{simple} environment.
The initial controller is set as $I_3$.
We also implemented a \textit{large simple} environment, which enlarges the Toeplitz matrix and identity matrices from the simple environment from dimension $n=3$ to $n=100$, to test a larger problem.
All constants remain the same.

The results from both simple environments are shown in the two left hand side plots of~\cref{fig:simple_env}. 
While the two-time scale actor-critic (AC) method has fast initial convergence, a majority of the seeds (19/32) yield unstable policies after 1200 samples of the simple environment.
In contrast, both the single- and multi-epoch NPG decrease the objective further and more gradually, and moreover, all the seeds output a stable policy.
The robust performance of both NPG methods highlights the importance of removing the assumption that the policy is almost surely stable at every iteration (see the discussion after~\cref{thm:a3_OPN}).
Moreover, the multi-epoch NPG has equal or better performance than the single-epoch.
This further supports the theoretical advantage of the former method over the latter (\cref{prop:a2_OPN}).
Finally, both NPG methods exhibit fast initial convergence followed by slower and stagnated convergence for the simple and large simple environments, respectively.
This slower performance is from the policy evaluation (i.e., critic) error, which propagates to the actor and stalls convergence.

\begin{figure}[htbp]
  \centering
  \includegraphics[width=\textwidth]{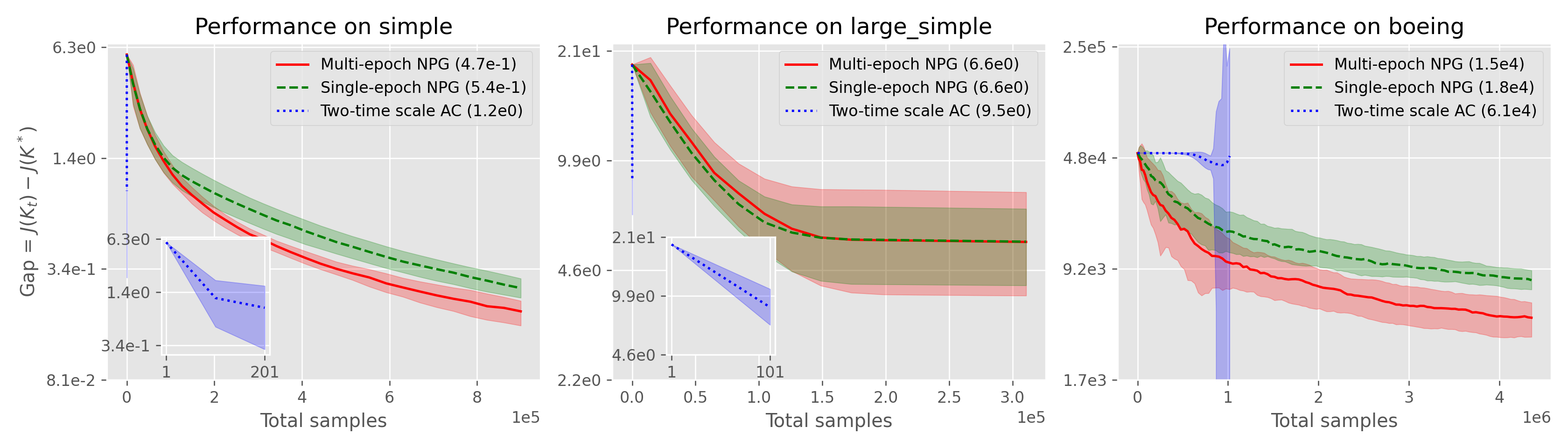}
\caption{The line and shaded region are the median and confidence interval, respectively, of the function gap $J(K_t)-J(K^*)$ (on a log scale) w.r.t.~the total samples over 32 seeds. 
The function gap is shown when a majority (i.e., $\geq60$\%) of the seeds have a stable policy.
Since the two-time scale AC does not have a stable policy in a majority of seeds after a few hundred samples in the simple environments, \edits{we add zoomed-in plots to better display their performance}.
In contrast, both of our NPG methods output stable policies in every seed.
The median last-iterate performance $J(K_t)$ is in the legend. The optimal $J(K^*)$ (left to right) is approximately 0.28, 0.93, 11200.}
\vspace{-1em}
\label{fig:simple_env}
\end{figure}

\subsection{Longitudinal control of a wide-body aircraft}
We consider a state-feedback control problem of the longitudinal dynamics of a Boeing 747 aircraft. The continuous time nonlinear dynamics are linearized around trim conditions and subsequently discretized as in~\cite{kotsalis2021convex}. The resulting matrices in the state recursion are 
\begin{align*}
    A 
    &= 
    \begin{bmatrix}[r]
    1 & -1.13 & -0.65 & -8.07 & 1.59 \\
    0 & 0.77 & 0.32 & -0.98 & -2.97 \\
    0 & 0.12 & 0.02 & -0.00 & -0.36 \\
    0 & 0.01 & 0.01 & -0.03 & -0.04 \\
    0 & 0.14 & -0.09 & 0.29 & 0.76
    \end{bmatrix},
    B 
    = 
    \begin{bmatrix}[r]
    89.20 & -50.17 & 1.13 &-19.35 \\
    5.22 &  6.36 & 0.23 & -0.32 \\
    -9.47 &  5.93 &-0.12 &  0.99 \\
    -0.32 &  0.32 & -0.01 & -0.01 \\
    -4.53 &  3.21 & -0.14 &  0.09
    \end{bmatrix}.
\end{align*}
Here, the $n$-dimensional ($n=5$) state is $x_t = [\alpha_t, v_t, v^\perp_t, \theta_t, \phi_t]$, where the variables correspond, respectively, to the deviation in altitude (positive is down), deviation in velocity along the aircraft's axis (forward is positive), deviation in velocity orthogonal to the aircraft's axis (positive is down), deviation in angle between the aircraft's axis and the X-axis, and angular velocity of the aircraft (pitch rate). 
We set the cost matrices $Q$ and $R$ as identity matrices, controller noise as $\sigma=1$, and covariance matrix as a symmetric positive definite matrix that attempts to capture dependencies in the noise (e.g., a positive correlation between the deviation in altitude $\alpha_t$ and orthogonal velocity $v^\perp_t$ from wind pushing the plane down).
Due to space constraints, we omit the matrix (it can be found in the code).
Finally, a matrix of all ones scaled by 1/200 is used as the initial stabilizing controller.

The results are shown in the right hand side of~\cref{fig:simple_env}. Similar to the simple synthetic problems, the single- and multi-epoch NPG can decrease the function gap at a near linear rate before slowing down. However, this problem requires about five times more samples than the simple synthetic problems to achieve a similar relative decrease in the function gap. This can be possibly explained by the large initial value $J(K_0) = 52499$, which affects the sampling complexity (\cref{thm:a3_OPN}). 
Overall, the multi-epoch NPG outperforms the single-epoch NPG, while the two-time scale actor-critic can only make a small improvement before diverging.

\section{Conclusion}
When applied to LQR, our proposed actor-critic method outputs a policy $\hat{K}$ such that $J(\hat{K}) - J(K^*) \leq \varepsilon$ with high probability and $\tilde{O}(\varepsilon^{-1})$ sampling complexity. 
The actor uses natural policy gradient~\cite{kakade_NIPS_01}, and we provide a novel analysis on the effect of the gradient estimation error. 
The critic is our novel conditional stochastic primal-dual method. 
Unlike previous model-free methods, our method only samples the system along a single trajectory (i.e., the online setting), and it does not reset the system nor assume some arbitrary behavior from the algorithm. In other words, we match the best rate from prior works (up to polylogarithmic factors) for a harder problem and with less restrictive assumptions.

While our work studies the average reward setting, our work should be amenable to the infinite-horizon discounted setting as well, since they both share some properties such as the P\L{}-condition and the natural gradient being partially defined by a solution to a linear system of equations. 


\section*{Acknowledgments}
We thank Sihan Zeng for pointing us to~\cite{zeng2021two,zhou2022single} and discussing the results therein. 


\bibliographystyle{siamplain}
\bibliography{references}

\appendix
\section{Proofs from~\cref{sec:a4_LQR}}  \label{sec:pfs_for_basic_props}
\begin{proof}[Proof of~\cref{lem:lqgp_a13}]
    The first three inequalities follow from the definitions of $\Sigma_K$, $P_K$, and $J(K)$, respectively, and basic trace inequalities (c.f.~\cite[Lemma 3.8]{fatkhullin2021optimizing}). 
    
    To show the last inequality, we first define the identity operator $I$ and linear symmetric operator $\mathcal{T}(X) := (A-BK)X(A-BK)^T$. 
    Since $\rho(A-BK) < 1$, then the Neumann series $(I-\mathcal{T})^{-1}(X) = \sum_{t \geq 0}(A-BK)^tX[(A-BK)^T]^t$ is well-defined~\cite[Lemma 18]{fazel2018global}.
    Denote $\mathcal{T}^t$ as the composition of $\mathcal{T}$ with itself $t$ times.
    Now, by definition of $\Sigma_K$ in~\eqref{eq:lyap_cov_mat}, we have $(I - \mathcal{T})(\Sigma_K) = \Psi + \sigma^2 BB^T$, and so
    \begin{align*}
        \sigma_{\min}(Q)^{-1} J(K)
        \geq
        \trace[\Sigma_K]
        &= 
        \trace[(I-\mathcal{T})^{-1}(\Psi + \sigma^2 BB^T)] \\
        &\geq
        \sigma_{\min}(\Psi) \trace[(I-\mathcal{T})^{-1}(I)] \\
        &=
        \textstyle \sigma_{\min}(\Psi) \sum_{t \geq 0} \trace[\mathcal{T}^t(I)] \\
        &\geq
        \textstyle \sigma_{\min}(\Psi) \sum_{t \geq 0} \rho(A-BK)^{2t} \\
        &=
        \sigma_{\min}(\Psi)/(1-\rho(A-BK)^2),
    \end{align*}
    where the third line is by definition of $(I-\mathcal T)^{-1}$ and exchanging summations, while the fourth line is from the inequality $\trace[(A-BK)^t((A-BK)^T)^t] \geq \|(A-BK)^t\|^2 \geq \rho(A-BK)^{2t}$.
\end{proof}
\section{Proofs from~\cref{sec:a1_LQR}} \label{sec:b7_LQR}

\begin{proof}[Proof of~\cref{lem:a1_LQR}]
    Recall from~\eqref{eq:phi_def} that $\phi_t \equiv \phi(x_t,u_t) = \svec( z_tz_t^T )$ and $z_t = [x_t,u_t]$. 
    Using the definition of the stochastic estimate $\tilde{H}_t$ for $H$ from~\eqref{linear_system_parameters},
        $\|\tilde{H}_t\| 
        \leq 
        2 + 2\|z_t\|^4 + \|z_t\|^2\|z_{t+1}\|^2  
        \leq
        2 + 4 \max\{\|z_t\|^4, \|z_{t+1}\|^4\}$. 
    By definition of $\mathcal{E}_t(\delta)$ in~\eqref{eq:a1_LQR} and the covariance matrix $\TSig_K^{(t)}$ from~\eqref{eq:a15_LQR}, then for any $t$,
    \begin{equation} \label{eq:a22_LQR}
    \begin{split}
        \|z_t\|_2^2 
        &\leq
        4\Big( \frac{c_2^2}{\sqrt{c_1}}\|\TSig_K^{(t)}\| + \frac{c_2^2}{c_1} \trace(\TSig_K^{(t)}) + \|z_0\|^2 \Big) \ln\frac{1}{\delta} \\
        &\leq
        4(1+\|K\|)^2\Big( \frac{c_2^2}{\sqrt{c_1}}\|\Sigma_K^{(t)}\| + \sigma^2 + \frac{c_2^2}{c_1} \trace(\Sigma_K^{(t)}) + \sigma^2 \cdot m + \|x_0\|^2 \Big) \ln\frac{1}{\delta} \\
        &\leq 
        4(1+\|K\|)^2\Big( \frac{c_2^2(\sqrt{c_1}+1)}{c_1} \cdot \frac{J(K)}{\sigma_{\min}(Q)} + \sigma^2 \cdot (m+1) + \|x_0\|^2 \Big) \ln\frac{1}{\delta},
    \end{split}
    \end{equation}
    where the last line used $\|\Sigma_K^{(t)}\| \leq \trace(\Sigma_K^{(t)}) \leq \trace(\Sigma_K) \leq J(K)/\sigma_{\min}(Q)$ thanks in part to~\cref{lem:lqgp_a13}. Combining the last two inequalities gets us the upper bound on $\|\tilde{H}_t\|$. One can similarly bound $\|\tilde{b}_t\|$ by recalling $c(x_t,u_t) = x_t^TQx_t + u_t^TRu_t$. 
%
\end{proof}

In order to prove~\cref{lem:lqgp_a22}, 
we will first need the geometrically fast mixing property of a linear dynamical system (e.g., evolution of the state-action pair $[x_t,u_t]$ induced by~\eqref{eq:randomized_linear_state_feedback} and~\eqref{eq:a17_LQR}).
\begin{proposition}[Proposition 3.1~\cite{tu2018least}] \label{prop:a1_LQR}
    Take the linear dynamical system $X_{t+1} = FX_t + w_t$, where $\omega_t \sim \mathcal{N}({0}, \Lambda)$ and $\Lambda \succ 0$. Suppose $\left\|F^k\right\| \leq \Gamma \rho^k$ for all $k \geq 0$, where $\Gamma>0$ and $\rho \in(0,1)$. Let $\mathbb{P}_{X_k}\left(\cdot \mid X_0=x\right)$ be the conditional distribution of $X_k$ given $X_0=x$. Then for all $k \geq 0$ and any distribution $\nu_0$ over $x \in \mathbb{R}^n$,
    \begin{align*}
        \mathbb{E}_{x \sim \nu_0}\left[\left\|\mathbb{P}_{X_k}\left(\cdot \mid X_0=x\right)-\nu_{\infty}\right\|_{\mathrm{tv}}\right] 
        \leq 
        \frac{\Gamma}{2} \sqrt{\mathbb{E}_{\nu_0}\left[\|x\|^2\right]+\frac{n}{1-\rho^2}} \rho^k,
    \end{align*}
    where $\|\cdot\|_{\mathrm{tv}}$ is the total-variation distance.
\end{proposition}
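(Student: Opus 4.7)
The plan is to reduce the TV distance to a same-covariance Gaussian comparison via a synchronous coupling, and then bound the resulting Mahalanobis factor using the contraction hypothesis $\|F^k\| \leq \Gamma \rho^k$.

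First, I would identify the two distributions explicitly. Unrolling the recursion gives $X_k = F^k x + \sum_{j=0}^{k-1} F^{k-1-j} w_j$, so $\mathbb{P}_{X_k}(\cdot \mid X_0 = x) = \mathcal{N}(F^k x, \Sigma_k)$ with $\Sigma_k := \sum_{j=0}^{k-1} F^j \Lambda (F^j)^{\T}$. The contraction hypothesis ensures that $\Sigma_k$ converges to a finite limit $\Sigma_\infty = \sum_{j=0}^{\infty} F^j \Lambda (F^j)^{\T}$ solving the discrete Lyapunov equation $\Sigma_\infty = F \Sigma_\infty F^{\T} + \Lambda$, and the unique stationary distribution is $\nu_\infty = \mathcal{N}(0, \Sigma_\infty)$.

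Next, I would construct a synchronous coupling. Draw $\tilde X_0 \sim \nu_\infty$ independently of $x$ and propagate a second trajectory $\tilde X_{t+1} = F \tilde X_t + w_t$ using the same noise sequence $\{w_j\}$. Stationarity gives $\tilde X_k \sim \nu_\infty$, while the two iterates obey the deterministic identity $X_k - \tilde X_k = F^k(x - \tilde X_0)$. Conditioned on $\tilde X_0$, both $X_k$ and $\tilde X_k$ are Gaussians with identical covariance $\Sigma_k$, so the convexity of TV under mixtures combined with Pinsker's inequality (applied to two Gaussians sharing a covariance, for which the KL divergence reduces to a pure Mahalanobis form) yields
\[
\|\mathbb{P}_{X_k}(\cdot \mid X_0 = x) - \nu_\infty\|_{\mathrm{tv}} \leq \frac{1}{2} \, \mathbb{E}_{\tilde X_0 \sim \nu_\infty} \left[\|\Sigma_k^{-1/2} F^k (x - \tilde X_0)\|\right].
\]

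To finish, I would take expectation over $x \sim \nu_0$ and apply Cauchy--Schwarz to obtain $\mathbb{E}[\|x - \tilde X_0\|] \leq \sqrt{\mathbb{E}_{\nu_0}\|x\|^2 + \trace(\Sigma_\infty)}$, and bound $\trace(\Sigma_\infty) \leq \sum_j n \|F^j\|^2 \|\Lambda\| \leq n\|\Lambda\|\Gamma^2/(1-\rho^2)$ from the series definition to produce the $n/(1-\rho^2)$ term (with the remaining constants absorbed into the normalization convention of the cited proposition). The main obstacle is turning $\|\Sigma_k^{-1/2} F^k\|$ into exactly the advertised $\Gamma \rho^k$ decay: the crucial structural fact is the Lyapunov identity $\Sigma_\infty = \Sigma_k + F^k \Sigma_\infty (F^k)^{\T}$, which shows that the missing variance between $\Sigma_k$ and $\Sigma_\infty$ is precisely $F^k \Sigma_\infty (F^k)^{\T}$ and therefore contracts at rate $\rho^{2k}$. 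Combined with $\Sigma_k \succeq \Lambda$, this gives $\|\Sigma_k^{-1/2} F^k v\|^2 \leq v^{\T} (F^k)^{\T} \Lambda^{-1} F^k v \leq \Gamma^2 \rho^{2k} \|\Lambda^{-1}\| \|v\|^2$, which is the quantitative heart of the claim and propagates the geometric rate cleanly to the final TV bound.
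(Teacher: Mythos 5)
Your argument is correct and is essentially the proof of the cited result: the paper offers no proof of its own (it cites Tu and Recht), and their derivation is exactly your synchronous coupling, the convexity of total variation over the mixture $\nu_\infty = \int \mathbb{P}_{X_k}(\cdot \mid X_0 = y)\,d\nu_\infty(y)$, the same-covariance Gaussian bound $\|\mathcal{N}(\mu_1,\Sigma)-\mathcal{N}(\mu_2,\Sigma)\|_{\mathrm{tv}} \le \tfrac12\|\Sigma^{-1/2}(\mu_1-\mu_2)\|$, and the estimates $\Sigma_k \succeq \Lambda$ and $\trace(\Sigma_\infty) \le n\|\Lambda\|\Gamma^2/(1-\rho^2)$. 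Two small remarks: for general $\Lambda \succ 0$ your chain actually produces $\tfrac{\Gamma\rho^k}{2}\|\Lambda^{-1}\|^{1/2}\big(\mathbb{E}_{\nu_0}\|x\|^2 + n\|\Lambda\|\Gamma^2/(1-\rho^2)\big)^{1/2}$, which collapses to the displayed constants only when $\Lambda = I$ and $\Gamma$ is absorbed --- this mismatch stems from the paper's loose transcription of the original statement (given there for $\Lambda=\sigma_w^2 I$ with explicit $\sigma_w$ and $\Gamma^2$ factors), not from a flaw in your reasoning, and you rightly flag it; also, the Lyapunov identity $\Sigma_\infty = \Sigma_k + F^k\Sigma_\infty(F^k)^{\T}$ that you single out as the crucial fact is never actually invoked in your final inequality chain, since $\Sigma_k \succeq \Lambda$ alone carries the rate.
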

We need to bound higher moments of a mean-zero Gaussian random vector. 
\begin{lemma} \label{lem:lqgp_a50} 
    Let $X \sim \mathcal{N}(0, \Sigma)$ and $I = \{i_j \in [n]\}_{j=1}^{2p}$ be a set of arbitrary indices for some positive integer $p$. Then
    $
        \mathbb{E}[\prod_{j=1}^{2p} X_{i_j}] 
        \leq 
        \big( 3 \cdot 5 \cdot \ldots \cdot (2p-1) \big) \|\Sigma\|^p$.
\end{lemma}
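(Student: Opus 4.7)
The plan is to apply Isserlis' theorem (also known as Wick's formula for Gaussian moments), which is cited in the lemma statement itself. Isserlis' theorem expresses any $(2p)$-th mixed moment of a centered multivariate Gaussian as a sum over perfect matchings of pairwise covariances. Explicitly,
\[
\mathbb{E}\!\left[\prod_{j=1}^{2p} X_{i_j}\right] \;=\; \sum_{\mathcal{P}} \prod_{\{a,b\}\in \mathcal{P}} \Sigma_{i_a,i_b},
\]
where $\mathcal{P}$ ranges over all perfect matchings of the index set $\{1,\dots,2p\}$. Once this identity is in hand, everything else is bookkeeping.

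The proof then splits into two elementary estimates. First, a standard combinatorial count gives that the number of perfect matchings of a $2p$-element set is $(2p-1)!! = 1\cdot 3\cdot 5\cdots (2p-1)$, which is exactly the combinatorial factor on the right-hand side. Second, I need to control each factor $|\Sigma_{i_a,i_b}|$. Since $\Sigma$ is symmetric positive semidefinite, Cauchy--Schwarz on the associated bilinear form gives $|\Sigma_{ij}|\le\sqrt{\Sigma_{ii}\Sigma_{jj}}$; moreover, each diagonal entry of a PSD matrix is bounded by its spectral norm, $\Sigma_{ii} = e_i^\T\Sigma e_i \le \|\Sigma\|$. Thus $|\Sigma_{i_a,i_b}|\le \|\Sigma\|$ for every index pair, and each $p$-fold product over a matching is bounded by $\|\Sigma\|^p$.

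Combining these observations via the triangle inequality yields
\[
\left|\mathbb{E}\!\left[\prod_{j=1}^{2p} X_{i_j}\right]\right|
\;\le\;
\sum_{\mathcal{P}} \prod_{\{a,b\}\in\mathcal{P}} |\Sigma_{i_a,i_b}|
\;\le\;
(2p-1)!!\,\|\Sigma\|^p,
\]
and the claimed (one-sided) bound follows immediately, since $\mathbb{E}[\cdots]\le |\mathbb{E}[\cdots]|$. There is no substantive obstacle in the argument — the only place one must be careful is to recognize that the indices in $I$ may repeat, which is harmless because Isserlis' formula treats each position $j\in\{1,\dots,2p\}$ as a distinct slot and only the covariance entries $\Sigma_{i_a,i_b}$ depend on the values $i_a,i_b$; the combinatorial count of matchings is unaffected.
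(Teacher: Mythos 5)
Your proposal is correct and follows essentially the same route as the paper's proof: invoke Isserlis' theorem to write the moment as a sum over perfect matchings, count the $(2p-1)!!$ matchings, and bound each covariance factor by $\|\Sigma\|^p$. The only difference is that you spell out the justification $|\Sigma_{ij}|\le\sqrt{\Sigma_{ii}\Sigma_{jj}}\le\|\Sigma\|$, which the paper leaves implicit.
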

\begin{proof}
    Since $\vert I \vert = 2p$ is even, Isserlis' Theorem~\cite{isserlis1918formula,withers1985moments} derives us the closed-form expression $\mathbb{E}[\prod_{j=1}^{2p} X_{i_j}] = \sum\limits_{\sigma \in \Pi(I)} \prod_{(i,j) \in \sigma} \mathrm{Cov}(X_i X_j) \leq \vert\Pi(I)\vert \cdot \|\Sigma\|^p$,
     where $\Pi(I)$ denotes the set of all partitions of $I$ into (disjoint) pairs. Here, it is known the cardinality of $\Pi(I)$ is $\vert \Pi(I) \vert = (2p)!/(2^p p!) = 1 \cdot 3 \cdot \ldots \cdot (2p-1)$.
\end{proof}
\begin{proof}[Proof of~\cref{lem:lqgp_a22}]
    Let $z_t = [x_t, u_t]$ be the state-action pair and $\mathrm{d}P_{t' \vert t}$ be the probability density function for $z_{t'}$ conditioned on $z_t$. Recall $H = \mathbb{E}_{\xi \sim \Pi_K}[\tilde{H}(\xi)]$, and denote $\tilde{H}_{1}$ as the stochastic estimate of $\tilde{H}$ without the constant value ``1'' in the leftmost column of the first row. Writing $\mathcal{E}_{t+\tau} \equiv \mathcal{E}_{t+\tau}(\delta)$, 
    then
    \begin{align*}
        &\|H - \mathbb{E}[\tilde{H}_{t+\tau} \vert \mathcal{F}_{t-1}, \mathcal{E}_{t+\tau}] \|_* \\
        &=
        \Big \| \int_{\xi \in \mathcal{E}_{t+\tau}} \tilde{H}_1(\xi) [\mathrm{d}\Pi_K(\xi) - \mathrm{d}P_{t + \tau \vert t }(\xi)] 
        + \int_{\xi \not \in \mathcal{E}_{t+\tau}} \tilde{H}_{1}(\xi) \mathrm{d}\Pi_K(\xi)
        \Big\|_* \\
        &\leq
        \int_{\xi \in \mathcal{E}_{t+\tau}} \| \tilde{H}(\xi) \|_* \vert \mathrm{d}\Pi_K(\xi) - \mathrm{d}P_{t+\tau \vert t}(\xi) \vert  
        +
        \int_{\xi \not \in \mathcal{E}_{t+\tau}} \| \tilde{H}_{1}(\xi) \|_* \mathrm{d}\Pi_K(\xi)
        \\
        &\stackrel{(i)}{\leq}
        \Big( \frac{M_H}{2}\sqrt{\|z_t\|^2+\frac{n+m}{1-\rho^2}} \Big) \ln\frac{1}{\delta} \Big) \rho^\tau 
        +
        \sqrt{\mathbb{E}_{\Pi_K} \|\tilde{H}_1\|^2 } \sqrt{1-\mathrm{Pr}\{\mathcal{E}_{t+\tau}\}} \\
        &\stackrel{(ii)}{\leq}
       \frac{M_H}{2}  \Big( M_H^{1/4} + \sqrt{\frac{n+m}{1-\rho}} \Big) \Big( \ln\frac{1}{\delta} \Big)^{3/2} \rho^\tau
       +
       \sqrt{\mathbb{E}_{\Pi_K} \|\tilde{H}_1\|^2 } \sqrt{\delta},
    \end{align*}
    where the bound on the first term in (i) is by~\cref{prop:a1_LQR} (where $\rho \equiv \rho(A-BK) < 1$ by assumption $K \in \mathcal S$) and~\cref{lem:a1_LQR} and the second term by Cauchy-Schwarz. Inequality (ii) is by $\|z_t\|^2 \leq \sqrt{M_H} \ln (1/\delta)$, where we used~\eqref{eq:a22_LQR} and $M_H$ from~\cref{lem:a1_LQR}, as well as~\cref{prop:pqgp_a9} and~\eqref{eq:a1_LQR} to bound the probability term. 
    By definition of $\tilde{H}(\xi)$ in~\eqref{linear_system_parameters} and denoting $\mathbb{E} = \mathbb{E}_{\Pi_K}$,
    \begin{align*}
        &\mathbb{E}\|\tilde{H}_1\|^2  
        \leq
        \mathbb{E}\|\tilde{H}_1\|^2_F \\
        &\leq
        10 (n+m)^2 \max_{i,j,u,v} \Big\{ \mathbb{E} ([z_t]_i [z_t]_j)^2, \mathbb{E} ([z_t]_i [z_t]_j [z_t]_u [z_t]_v)^2, \mathbb{E} ([z_t]_i [z_t]_j [z_{t+1}]_i [z_{t+1}]_v)^2 \Big\} \\
        &\leq
        40 (n+m)^2 \max_i\{ \mathbb{E}([z_t]_i)^4, \mathbb{E}([z_t]_i)^8, \mathbb{E}([z_{t+1}]_i)^8 \} \\
        &\leq
        \big( 65 (n+m) \max\{\|\TSig_K\|^2, \|\TSig_K\|^4\}  \big)^2 \equiv O_H^2,
    \end{align*}
    where the last line is by~\cref{lem:lqgp_a50}. Combining the last two results yields the bound on the bias for $\tilde{H}$. 
    By noting the similarity in definition between $\tilde{H}$ and $\tilde{b}$ in~\eqref{linear_system_parameters}, one can similarly bound $\|b - \mathbb{E}[b_{t+\tau} \vert \mathcal{F}_{t-1}]\|_*$. 
\end{proof}

\begin{proof}[Proof of~\cref{lem:lqgp_a29}]
Recall the matrix $H = \mathbb{E}_{\Pi_K}[\tilde{H}]$ from~\eqref{linear_system_parameters}. 
Our goal is to show the smallest singular value of $H$, denoted by $\sigma_{\min}(H)$, is greater than zero, or upper bound $\|H^{-1}\| = (\sigma_{\min}(H))^{-1}$.
Assuming this is true, because $\|H\|$ is bounded (by $K \in \mathcal S$ and~\cref{lem:a6_OPN}), then $H$ is invertible.
When $H$ is invertible, then there exists a $\vartheta^*$ such that $H\vartheta^*=b$, which
in view of the discussion prior to the statement of~\cref{lem:lqgp_a29}, 
implies $g([\vartheta,y]) \geq f(\vartheta) = \|H\vartheta-b\| \geq \sigma_{\min}(H)\|\vartheta-\vartheta^*\|$.
Therefore, when $\sigma_{\min}(H)$ is positive, the sharpness constant $\mu$ can be set to $ \sigma_{\min}(H)$.

Towards that, using the condition $\sigma^2 \geq \sigma_{\min}[1 + \|K\|^2]$ and the policy $K \in \mathcal{S}$ within the proof of~\cite[Lemma B.2]{yang2019provably}, one can show
\begin{align*}
    \|H^{-1}\|^2 
    &\leq 
    \frac{1}{(1-\rho^2)^{2}} \Big( \frac{1}{\sigma_{\min}(\Psi)^{4}} + \frac{n+m}{\sigma_{\min}(\Psi)^{2}} + (1-\rho^2) \Big) 
    <
    \frac{(n+m)(1+\frac{1}{\sigma_{\min}(\Psi)^2})^2}{(1-\rho^2)^2},
\end{align*}
where we used the fact $\rho \equiv \rho(A-BK) \in (0,1)$ since $K \in \mathcal{S}$. 
\end{proof}

\end{document}